\title{Orbit problems for free-abelian times free groups\\and related families
}
\author{André Carvalho$^*$, Jordi Delgado$^\dag$}
\date{\vspace{-7pt}
    $^*$Centro de Matemática, Faculdade de Ciências da Universidade do Porto\\%
    $^\dag$Departament de Matemàtiques, Universitat Politècnica de Catalunya\\[3ex]%
    \today
}
\newcommand{\Addresses}{{
  \bigskip
  \footnotesize

  André Carvalho\\\nopagebreak
  \textsc{Centro de Matemática, Faculdade de Ciências da Universidade do Porto\\
  R. Campo Alegre s/n\\ 4169-007 Porto, Portugal}\\\nopagebreak
  \url{andrecruzcarvalho@gmail.com}

  \medskip

  Jordi Delgado\\\nopagebreak
  \textsc{Departament de Matemàtiques, Universitat Politècnica de Catalunya\\\nopagebreak
  Escola Politècnica Superior d’Enginyeria de Manresa\\\nopagebreak 
  Av.~de les Bases de Manresa, 61, 73 \\\nopagebreak 
  08242 Manresa, Barcelona}\\\nopagebreak 
  \url{jorge.delgado@upc.edu }

}}
\begin{document}

\maketitle

\begin{abstract}
We prove that the Brinkmann Problems (\BrP\ \& \BrCP) and the twisted conjugacy Problem (\TCP) are decidable for any endomorphism of a free-abelian times free (FATF) group~$\Fn \times \Zm$.
Furthermore, we prove the decidability of the two-sided Brinkmann conjugacy problem (\tsBrCP) for monomorphisms of FATF groups (and combine it with~$\TCP
$) to derive the decidability of the conjugacy problem 
for ascending HNN extensions of FATF groups.  
\end{abstract}

\bigskip

\textsc{Keywords}: free group, free-abelian group, direct product, free-abelian times free, endomorphisms, orbit problem, Brinkmann problem, conjugacy problem.

\textsc{Mathematics Subject Classification 2020}: 
20E06,
20F05, 20F10.

\vspace{15pt}

The family of free-abelian times free (FATF) groups, namely direct products of (finitely generated) free and free-abelian groups ($\Fn \times \Zm$) has elicited considerable interest in recent years. Their main point of interest lies in the fact that, despite being seemingly simple combinations of well-known groups (free and free-abelian), their behavior often cannot be reduced ---and could even be radically different--- from that of their constituent factors.

A typical example is the behavior of intersections: non-degenerated FATF groups do not satisfy the Howson property, whereas free and free-abelian groups certainly do (see~\cite{delgado_extensions_2017,Delgado_stallings_FTA_2022,delgado_intersection_2023} for results in this direction). Other non-trivial problems considered for this family include the study of relative order and spectrum \parencite{delgado_relative_2024}, compression and inertia \cite{roy_degrees_2021}, or different aspects of its group of automorphisms~\parencite{delgado_algorithmic_2013,roy_fixed_2020,carvalho_dynamics_2022}.

In \Cref{sec: FATF} we briefly introduce
free-abelian times free (FATF) groups $\Fn \times \ZZ^m$, present the (two types) of possible endomorphisms of these groups, their main properties, and the form of the powers of each type.

In \Cref{sec: Brinkmann}, we survey the decidability of the Brinkmann problems for free and free-abelian groups, and we combine these results with the findings in \Cref{sec: FATF} to deduce the decidability of Brinkmann Problems for endomorphisms of FATF groups.

Finally, in \cref{sec: CP HNN}, we consider a (two-sided) variation of the Brinkmann problems introduced by Logan \cite{logan_conjugacy_2023} in order to extend Bogopolski, Martino and Ventura's machinery in \cite{bogopolski_orbit_2010} to ascending HNN extensions of free groups. Using the same approach, we prove that the twisted conjugacy problem and the two-sided Brinkmann conjugacy problem are also decidable for FATF groups to finally reach the decidability of the conjugacy problem for ascending HNN extensions of FATF groups.

\medskip

We use lowercase boldface Latin font ($\vect{a},\vect{b},\vect{c},\ldots$) to denote elements in $\Zm$, and uppercase boldface Latin font to denote matrices ($\matr{A},\matr{B},\matr{C},\ldots $). Lowercase Greek letters ($\auti_i,\varphi, \chi,\ldots$) are employed to denote homomorphisms involving (typically free or free-abelian) factor groups, while uppercase Greek letters ($\Phi,\Psi,\Omega,\cdots$) are used for homomorphisms between FATF groups.

Additionally, both homomorphisms and matrices are assumed to act on the right; that is, we denote the image of the element $x$ by the homomorphism $\varphi$ as $(x)\varphi$ (or simply $x \varphi$) and the composition \smash{$A \overset{\varphi}{\to} B \overset{\psi}{\to} C$} is denoted as $\varphi \psi$.

The following is a convenient shorthand notation used throughout the paper to state decision problems. We write 
\begin{equation*}
\mathsf{D} \,\equiv\,
\mathsf{f}(\texttt{x})\?_{\mathsf{x} \,\in\, \mathsf{X}}
\end{equation*}
to denote the
the decision problem $\mathsf{D}$, with inputs $\mathsf{x}$ in $\mathsf{X}$ and outputs \yep/\nop\ depending on whether the condition $\mathsf{f}(\mathsf{x})$ holds.

\section{Free-abelian times free groups and their endomorphisms} \label{sec: FATF}

In this section we survey the basics about free-abelian
times free groups and their homomorphisms (see \cite{delgado_algorithmic_2013} and \cite{delgado_extensions_2017} for details and proofs).

A group $G$ is called \defin{free-abelian times free} (FATF) if it
is the direct product of a finitely generated free group $\Fn$ and a finitely generated free-abelian group $\ZZ^m$; that is if it 
admits a presentation of the form: 
\begin{equation} \label{eq: pres FATF}
\Pres{ x_1,\ldots,x_n,t_1,\ldots,t_m}{ t_it_j \,=\, t_jt_i,\, t_ix_k=x_kt_i \,, i,j\in[1,m], \ k\in[1,n] },
 \end{equation}
where $n,m \geq 0$.
Note that then, $\gen{x_1,\ldots,x_n} \isom \Fn$, $\gen{t_1,\ldots,t_m} \isom \ZZ^m$, and ${G \isom \Fn \times \ZZ^m}$. 
We generically denote FATF groups by $\GG$.

Note that, given a word in the generators, we can use the commutativity relations in~\eqref{eq: pres FATF} to orderly move all the $t_j$'s (say) to the right and describe the elements in $G$ by a normal form $u(x_1,\ldots,x_n)\, t_1^{a_1} \cdots t_m^{a_m}$, which we abbreviate as $u \t[a]$, where $u \in \Fn$ and $\vect{a} = (a_1,\ldots,a_m) \in \ZZ^m$ are called the free and free-abelian parts of $\fba{u}{a}$ respectively.
Note that then, $(u\t^\mathbf{a})(v\t^\mathbf{b})=uv \, \t^\mathbf{a+b}$.

We denote by $\pi\colon \FTA \to \Fn$, $u\t^\textbf{a}\mapsto u$ and by $\tau\colon \FTA \to \ZZ^m$, $u\t^\textbf{a}\mapsto \textbf{a}$, the natural epimorphisms to the free and the free-abelian part respectively.


Below, we summarize 
several fundamental results regarding endomorphisms of FATF groups to be used in later sections of this paper.  The starting point is the description and classification of the endomorphisms of FATF groups which was initially obtained in \cite{delgado_algorithmic_2013,delgado_extensions_2017} and later generalized to FABF groups ($\Fn \ltimes \Zm$) in \cite{carvalho_free-abelian_2024}.

\begin{prop}
\label{prop: endos FATF}
Let $m\geq 1,n\geq 2$. The following is a complete list of the endomorphisms of~\,$\GG = \Fn \times \ZZ^m$:
\begin{enumerate} [dep]
\item Endomorphisms of type I:
\begin{equation} \label{eq: hom FATF I}
\Phi_{\varphi,\mathbf{Q,P}}\colon
 u\t^{\vect{a}} \mapsto
 u\varphi \, \t^{\vect{a}\matr{Q}+ \vect{u} \matr{P}} \,,
\end{equation}
where $\varphi \in \End(\Fn)$, $\mathbf{Q}\in \mathcal{M}_{m}(\ZZ)$,
$\mathbf{P}\in \mathcal{M}_{n\times m}(\ZZ)$, and $\vect{u} = u\ab \in \ZZ^n$.
\item  \label{item: type II}
Endomorphisms of type II:
\begin{equation} \label{eq: hom FATF II}
    \Phi_{w,\mathbf{r,s,Q,P}}\colon
 u\t^{\vect{a}} \mapsto
w^{\vect{a} \vect{r}^{\,\T} +\vect{u}\vect{s}^\T} \, \t^{\vect{a}\matr{Q}+\vect{u}\matr{P}}
\end{equation}
where $w\in \Fn \setmin\{\trivial\}$ is not a proper power, $\mathbf{Q}\in \mathcal{M}_{m}(\ZZ)$, $\mathbf{P}\in
\mathcal{M}_{n\times m}(\ZZ)$, $ \vect{r}\in \ZZ^m\setmin\{\vect{0}\} $,
$\mathbf{s}\in \ZZ^n$, and $\vect{u} = u\ab \in \ZZ^n$. \qed
\end{enumerate} 
\end{prop}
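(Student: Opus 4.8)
The plan is to combine the universal property of the presentation~\eqref{eq: pres FATF} with the fact that $\Fn$ is free, so that its abelian subgroups are cyclic and the centraliser of a nontrivial element is the (maximal cyclic) subgroup generated by its primitive root. An endomorphism $\Phi$ of $\GG$ is completely determined by the images $x_k\Phi = u_k\,\t^{\vect{p}_k}$ and $t_j\Phi = v_j\,\t^{\vect{q}_j}$ of the generators in~\eqref{eq: pres FATF}, where $u_k, v_j \in \Fn$ and $\vect{p}_k, \vect{q}_j \in \ZZ^m$; conversely such a choice extends to a (necessarily unique) endomorphism precisely when the images of the defining relators $[t_i,t_j]$ and $[t_i,x_k]$ are trivial in $\GG$. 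Since the $\t$-parts are central they cancel inside any commutator, so this condition reduces to $[v_i,v_j] = \trivial$ and $[v_j,u_k] = \trivial$ in $\Fn$, for all $i,j,k$; note that the presentation carries no relator involving only the $x_k$, so the $u_k$ are otherwise unconstrained.

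I would then split into two cases according to $C = \gen{v_1,\ldots,v_m} \leq \Fn$, which is abelian by the above, hence cyclic. If $C = \{\trivial\}$ then $t_j\Phi = \t^{\vect{q}_j} \in \ZZ^m$ for every $j$, and $\Phi = \Phi_{\varphi,\mathbf{Q},\mathbf{P}}$ is of type~I, with $\varphi \in \End(\Fn)$ given by $x_k\varphi = u_k$ and $\mathbf{Q}$, $\mathbf{P}$ the integer matrices whose rows are the $\vect{q}_j$ and the $\vect{p}_k$ respectively. If $C \neq \{\trivial\}$, let $w \in \Fn\setmin\{\trivial\}$ be the unique element that is not a proper power with $C \leq \gen{w}$; then $v_j = w^{r_j}$ with $\vect{r} = (r_1,\ldots,r_m) \neq \vect{0}$, and fixing $j_0$ with $r_{j_0} \neq 0$ the relation $[v_{j_0},u_k] = \trivial$ forces $u_k$ into the centraliser of the nontrivial element $w^{r_{j_0}}$, namely $\gen{w}$, so $u_k = w^{s_k}$ and $\Phi$ is the type~II endomorphism with this $w$, $\vect{r}$, $\vect{s} = (s_1,\ldots,s_n)$, $\mathbf{Q}$, $\mathbf{P}$.

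It then remains to read off the two displayed formulas by expanding $(u\,\t^{\vect{a}})\Phi = (u\Phi)\prod_j (t_j\Phi)^{a_j}$: writing $u$ as a word in the $x_k$ and using that the $\t^{\vect{p}_k}$ are central, the free part of $u\Phi$ is the same word evaluated at the $u_k$ and its $\ZZ^m$-part is $\vect{u}\mathbf{P}$ (with $\vect{u} = u\ab$), while $\prod_j (t_j\Phi)^{a_j} = \big(\prod_j v_j^{a_j}\big)\,\t^{\vect{a}\mathbf{Q}}$ because the $v_j$ commute with each other and with all $\t$'s. Specialising $v_j = \trivial$ yields~\eqref{eq: hom FATF I}, and specialising $v_j = w^{r_j}$, $u_k = w^{s_k}$ collapses the free part to $w^{\vect{u}\vect{s}^\T + \vect{a}\vect{r}^\T}$, yielding~\eqref{eq: hom FATF II}. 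Conversely both families satisfy the triviality criterion of the first paragraph (the $v_j$ are trivial for type~I; the $v_j$ and $u_k$ are all powers of $w$ for type~II), so each is indeed an endomorphism and the list is complete. The only genuinely delicate ingredient is the structural input on free groups used to produce $w$, together with the care needed to take $w$ as a primitive root rather than a bare generator of $C$; everything else is bookkeeping with the central $\t$-parts and with the matrices $\mathbf{P}$ and $\mathbf{Q}$.
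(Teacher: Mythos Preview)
The paper does not prove this proposition: it is stated with a terminal \qed and attributed to \cite{delgado_algorithmic_2013,delgado_extensions_2017} (and the generalisation in \cite{carvalho_homomorphisms_2023}), so there is no in-paper proof to compare against. Your argument is correct and is essentially the standard one from those references: reduce the homomorphism condition on the presentation~\eqref{eq: pres FATF} to commutation of the free parts $v_j$ (and of each $v_j$ with each $u_k$) in $\Fn$, use that abelian subgroups of $\Fn$ are cyclic to get a dichotomy on $C=\gen{v_1,\ldots,v_m}$, and in the nontrivial case pass to the primitive root $w$ and use that centralisers of nontrivial elements in $\Fn$ equal $\gen{w}$. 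One cosmetic point: the primitive root $w$ is only unique up to inversion, so the type~II parameters $(w,\vect{r},\vect{s})$ and $(w^{-1},-\vect{r},-\vect{s})$ describe the same endomorphism; this does not affect completeness of the list, but your phrase ``the unique element'' slightly overstates it.
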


\begin{cor}\label{cor: injective bijective}
An endomorphism $\Phi \in \End(\Fn \times \Zm)$ is injective (\resp surjective or bijective) if and only if it is of type I, $\varphi$ is injective (resp. bijective) and $\det(\matr{Q}) \neq 0$ (\resp $\det(\matr{Q}) = \pm 1$). \qed
\end{cor}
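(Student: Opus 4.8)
The plan is to reduce to type~I endomorphisms and then analyse those by pushing injectivity and surjectivity statements about $\Phi$ down to the data $\varphi$ and $\matr{Q}$, via the natural projection $\pi$ and the subgroup $\trivial\times\ZZ^m$. I would first check that a type~II endomorphism $\Phi=\Phi_{w,\mathbf{r,s,Q,P}}$ is neither injective nor surjective: its image lies inside $\gen{w}\times\ZZ^m\leq\Fn\times\ZZ^m$, whose free part $\gen{w}$ is cyclic and hence both abelian and --- since $n\geq2$ --- proper in $\Fn$. Properness rules out surjectivity, while the commuting of $x_1\Phi$ and $x_2\Phi$ places the nontrivial element $[x_1,x_2]$ in $\ker\Phi$, ruling out injectivity. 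By completeness of the list in \Cref{prop: endos FATF}, it then suffices to treat $\Phi=\Phi_{\varphi,\matr{Q},\matr{P}}$.

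For the sufficiency of the stated conditions I would simply solve the defining equations. One has $\fba{u}{a}\in\ker\Phi$ iff $u\in\ker\varphi$ and $\vect{a}\matr{Q}=-\vect{u}\matr{P}$, so if $\varphi$ is injective and $\det\matr{Q}\neq0$, then $u=\trivial$, hence $\vect{u}=\vect{0}$, hence $\vect{a}\matr{Q}=\vect{0}$, and hence $\vect{a}=\vect{0}$ (since $\matr{Q}$ is invertible over $\mathbb{Q}$); thus $\Phi$ is injective. If moreover $\varphi$ is bijective and $\det\matr{Q}=\pm1$, an arbitrary $\fba{v}{b}$ is the image of $\fba{u}{a}$ with $u=v\varphi^{-1}$ and $\vect{a}=(\vect{b}-\vect{u}\matr{P})\matr{Q}^{-1}$, which lies in $\ZZ^m$ because $\matr{Q}^{-1}$ is an integer matrix; thus $\Phi$ is surjective, and the bijective case is the conjunction of the two.

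For the necessity, post-composing $\Phi$ with $\pi\colon\FTA\to\Fn$ recovers $\varphi$, so if $\Phi$ is surjective then so is $\varphi$, and a surjective endomorphism of the finitely generated free group $\Fn$ is an automorphism; thus $\varphi$ is bijective. Since $\varphi$ is bijective, any $\Phi$-preimage of an element of $\trivial\times\ZZ^m$ again lies in $\trivial\times\ZZ^m$ (its free part is killed by $\varphi$, hence trivial), and as $\Phi$ acts on this subgroup as $\vect{a}\mapsto\vect{a}\matr{Q}$ it follows that $\matr{Q}$ is onto, whence $\det\matr{Q}=\pm1$. If instead $\Phi$ is injective, then $\fba{\trivial}{a}\in\ker\Phi$ iff $\vect{a}\matr{Q}=\vect{0}$, so $\det\matr{Q}=0$ would provide a nonzero element of $\ker\Phi$; hence $\det\matr{Q}\neq0$. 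And were $\varphi$ non-injective, I would take $\trivial\neq u\in\ker\varphi$, set $N=|\det\matr{Q}|$ and $\vect{a}=-N\,\vect{u}\matr{P}\matr{Q}^{-1}$ --- integral by Cramer's rule --- and observe that $\fba{u^N}{a}$ is a nontrivial element of $\ker\Phi$ (nontrivial because $\Fn$ is torsion-free), a contradiction; so $\varphi$ is injective, and once more the bijective case is the conjunction.

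The main obstacle is conceptual rather than computational: because the off-diagonal block $\matr{P}$ couples the free and free-abelian coordinates, $\Phi$ is not a direct product of a map on $\Fn$ and a map on $\ZZ^m$, so no purely coordinatewise argument is available. This is precisely what forces the two small devices above --- restricting to the subgroup $\trivial\times\ZZ^m$ (which is $\Phi$-invariant for type~I maps) in order to isolate $\matr{Q}$, and passing to the power $u^N$ with $N=|\det\matr{Q}|$ in order to manufacture kernel elements out of $\ker\varphi$ --- while everything else is routine bookkeeping with the normal form $\fba{u}{a}$.
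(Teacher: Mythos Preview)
Your argument is correct in every particular. The paper, however, does not supply a proof of this corollary at all: it is stated with an immediate \qed, the result being imported from the references \cite{delgado_algorithmic_2013,delgado_extensions_2017} surveyed at the start of \Cref{sec: FATF}. So there is no ``paper's own proof'' to compare against; what you have written is a clean, self-contained verification that could serve as one. The two small tricks you flag --- restricting to the invariant subgroup $\trivial\times\ZZ^m$ to isolate $\matr{Q}$, and clearing denominators via $N=|\det\matr{Q}|$ to lift an element of $\ker\varphi$ to an element of $\ker\Phi$ --- are exactly the standard moves, and your handling of the type~II case (abelian image, nontrivial commutator in the kernel) matches the remark the paper makes after \Cref{prop: endos FATF} about $\im\Phi$ being abelian.
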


An immediate consequence of our description is the well-known result below. In \cite{green_graph_1990} and \cite{humphries_stephen_p._representations_1994}
it is proved that finitely generated partially commutative groups (which include FATF groups) are residually finite and, thus, Hopfian.
\begin{cor}
 Nontrivial FATF groups are Hopfian and not co-Hopfian. \qed
\end{cor}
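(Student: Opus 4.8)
The plan is to read off both assertions directly from \Cref{cor: injective bijective}. Recall that a group is \emph{Hopfian} if every surjective endomorphism is injective (equivalently, an automorphism), and \emph{co-Hopfian} if every injective endomorphism is surjective.

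For Hopfianity, suppose $\Phi\in\End(\GG)$ is surjective. By \Cref{cor: injective bijective}, $\Phi$ is of type~I with $\varphi$ bijective and $\det(\matr{Q})=\pm1$; in particular $\varphi$ is injective and $\det(\matr{Q})\neq0$, so applying \Cref{cor: injective bijective} once more shows that $\Phi$ is injective, hence bijective. (Equivalently, one may observe that $\GG$ is a direct product of two finitely generated residually finite groups, hence finitely generated and residually finite, and therefore Hopfian by Mal'cev's theorem; the passage through \Cref{cor: injective bijective} merely repackages the Hopfianity of $\Fn$ — used to upgrade ``$\varphi$ surjective'' to ``$\varphi$ bijective'' — together with the trivial Hopfianity of $\ZZ^m$.)

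For the failure of co-Hopfianity (where we use $m\geq1$), it is enough to exhibit a single injective endomorphism of $\GG$ that is not onto. Consider the type~I endomorphism
\begin{equation*}
\Phi \,=\, \Phi_{\mathrm{id}_{\Fn},\,2\matr{I}_m,\,\matr{0}}\colon \quad u\t^{\vect{a}} \,\longmapsto\, u\,\t^{2\vect{a}}.
\end{equation*}
Since $\mathrm{id}_{\Fn}$ is injective and $\det(2\matr{I}_m)=2^m\neq0$, \Cref{cor: injective bijective} gives that $\Phi$ is injective; since $2^m\neq\pm1$, the same corollary shows that $\Phi$ is not surjective. Hence $\GG$ is not co-Hopfian.

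There is essentially no obstacle here, as all the real content has been established in \Cref{prop: endos FATF} and \Cref{cor: injective bijective}; the only point meriting a word of care is the degenerate range ($m=0$, or $n\leq1$), for which one falls back on the classical facts that finitely generated free and free-abelian groups are Hopfian but, when non-trivial, not co-Hopfian (e.g. $\vect{a}\mapsto2\vect{a}$ on $\ZZ^m$ with $m\geq1$, or the standard proper self-embedding of $\Fn$ for $n\geq2$).
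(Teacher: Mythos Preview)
Your proposal is correct and follows exactly the intended route: the paper gives no proof at all (the \qed\ is placed immediately after the statement), treating both claims as immediate consequences of \Cref{cor: injective bijective}, which is precisely what you unpack. Your handling of the degenerate ranges is a welcome bit of extra care that the paper omits.
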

The respective expressions for composition and powers of endomorphisms of each type follow easily from the expressions \eqref{eq: hom FATF I} and \eqref{eq: hom FATF II}.

\begin{prop} \label{prop: powers FATF I}
Let $\Phi_{\varphi,\matr{Q},\matr{P}}, \Phi_{\varphi',\matr{Q'},\matr{P'}} \in \End_{I}(\Fn \times \Zm)$ and let $k>0$. Then, 
\begin{align}
    \Phi_{\varphi,\matr{Q},\matr{P}} \
    \Phi_{\varphi',\matr{Q'},\matr{P'}}
    &\,=\,
    \Phi_{\varphi\!\varphi',\,\matr{Q}\matr{Q'},\,\matr{P} \matr{Q'} + \varphi\ab \matr{P'}}\\
    (\Phi_{\!\varphi,\matr{Q},\matr{P}})^k
    &\,=\, 
    \Phi_{\varphi^k,\,\matr{Q}^k, \, \matr{P}^{(k)}}, \label{eq: powers hom FATF I}
\end{align}
where $\varphi \ab \colon \Zn \to \Zn$ is the abelianization of $\varphi$, and
$\matr{P}^{(k)} = \sum_{i=1}^{k} (\varphi \ab)^{i-1} \matr{P} \matr{Q}^{k-i}.$ \qed
\end{prop}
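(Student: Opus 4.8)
The plan is to verify both identities by a direct computation from the defining formula~\eqref{eq: hom FATF I}; the only structural input needed is that abelianization is a functor, so that for $u \in \Fn$ and $\varphi \in \End(\Fn)$ we have $(u\varphi)\ab = (u\ab)(\varphi\ab)$ as elements of $\Zn$, and consequently $\varphi \mapsto \varphi\ab$ respects composition and powers, i.e.\ $(\varphi\varphi')\ab = (\varphi\ab)(\varphi'\ab)$ and $(\varphi^k)\ab = (\varphi\ab)^k$. I would record this observation once at the outset and then let the bookkeeping run.

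For the composition formula I would take an arbitrary $u\t^{\vect{a}} \in \Fn \times \Zm$ and chase it through the two maps. Applying $\Phi_{\varphi,\matr{Q},\matr{P}}$ gives $u\varphi\,\t^{\vect{a}\matr{Q}+\vect{u}\matr{P}}$, and then applying $\Phi_{\varphi',\matr{Q'},\matr{P'}}$ and using that the abelianization of the free part $u\varphi$ is $\vect{u}(\varphi\ab)$, one obtains free part $u(\varphi\varphi')$ and free-abelian part $(\vect{a}\matr{Q}+\vect{u}\matr{P})\matr{Q'} + \vect{u}(\varphi\ab)\matr{P'} = \vect{a}(\matr{Q}\matr{Q'}) + \vect{u}(\matr{P}\matr{Q'}+\varphi\ab\matr{P'})$. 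Comparing with~\eqref{eq: hom FATF I}, this element is the image of $u\t^{\vect{a}}$ under $\Phi_{\varphi\varphi',\,\matr{Q}\matr{Q'},\,\matr{P}\matr{Q'}+\varphi\ab\matr{P'}}$; since $u\t^{\vect{a}}$ was arbitrary, the two endomorphisms agree. (In particular this confirms that type I endomorphisms are closed under composition, as predicted by \Cref{prop: endos FATF}.)

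The power formula then follows by induction on $k$, the case $k=1$ being the identity $\matr{P}^{(1)} = \matr{P}$. For the inductive step I would write $(\Phi_{\varphi,\matr{Q},\matr{P}})^{k+1} = (\Phi_{\varphi,\matr{Q},\matr{P}})^{k}\,\Phi_{\varphi,\matr{Q},\matr{P}}$, apply the composition formula just proved to the data $(\varphi^k,\matr{Q}^k,\matr{P}^{(k)})$ and $(\varphi,\matr{Q},\matr{P})$, and use $(\varphi^k)\ab = (\varphi\ab)^k$; this yields free part $\varphi^{k+1}$, matrix $\matr{Q}^{k+1}$, and $\matr{P}$-component $\matr{P}^{(k)}\matr{Q} + (\varphi\ab)^k\matr{P}$. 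It then remains to check the purely combinatorial identity $\matr{P}^{(k)}\matr{Q} + (\varphi\ab)^k\matr{P} = \matr{P}^{(k+1)}$, which is immediate upon expanding $\matr{P}^{(k)}\matr{Q} = \sum_{i=1}^{k}(\varphi\ab)^{i-1}\matr{P}\matr{Q}^{k+1-i}$ and observing that $(\varphi\ab)^k\matr{P}$ supplies exactly the missing $i=k+1$ term of $\matr{P}^{(k+1)} = \sum_{i=1}^{k+1}(\varphi\ab)^{i-1}\matr{P}\matr{Q}^{k+1-i}$. There is no substantial obstacle here; the only point requiring care is to keep the left/right conventions consistent throughout (matrices and $\varphi\ab$ acting on row vectors on the right), so that the term $\varphi\ab\matr{P'}$ — meaning "$\varphi\ab$ followed by right multiplication by $\matr{P'}$" — ends up on the correct side.
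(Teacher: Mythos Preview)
Your proof is correct and is precisely the direct verification the paper has in mind: the proposition is stated with a \qed\ and no proof, the surrounding text simply remarking that the composition and power formulas ``follow easily from the expressions \eqref{eq: hom FATF I} and \eqref{eq: hom FATF II}.'' Your element-chase through \eqref{eq: hom FATF I} together with the functoriality of abelianization, followed by the induction on $k$ checking $\matr{P}^{(k)}\matr{Q} + (\varphi\ab)^k\matr{P} = \matr{P}^{(k+1)}$, is exactly that easy verification made explicit.
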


Finally, we highlight two important properties that combine into a compact description of the (powers of) endomorphisms of type II.

\begin{rem}
Let $\Phi \in \Endii (\Fn \times \Zm)$ be an endomorphism of type II, and let $u,v \in \Fn$. Then,
\begin{enumerate}[ind]
    \item if $u\ab =\vect{u} = \vect{v} = v\ab$ then $\Phi(u\t[a]) = \Phi(v\t[a])$; that is, $\Phi$ factors through the abelianization $\Fn \times \Zm \ni u\t[a] \to (\vect{u} , \vect{a}) \in \Zn \times \Zm$.
    \item the image $\im (\Phi) \leqslant \gen{w} \times \Zm \isom \ZZ^{m+1}$. In particular, $\im (\Phi)$ is abelian.
\end{enumerate}

Note that these properties allow to understand endomorphisms of type II essentially as linear maps. Concretely, if we express the images of $\Phi$ \wrt the (ordered) free-abelian basis $W = (w,t_1,\ldots ,t_m)$  of 
(the free-abelian group)
$\gen{w} \times \Zm$, then any homomorphism \eqref{eq: hom FATF II} of type II can be decomposed as:
\begin{equation} \label{eq: hom FATF II concise}
\begin{array}{rcccl}  \Phi_{w,\mathbf{r,s,Q,P}}\colon \Fn \times \ZZ^m & \overset{\rho}\to & \Zn \times \Zm & \to &\ZZ^{m+1}\\
    u \t[a] & \mapsto & (\vect{u},\vect{a}) & \mapsto & (\vect{u},\vect{a})
\left(
\begin{smallmatrix}
    \vect{s}^{\,\T} & \matr{P}\\
    \vect{r}^{\,\T} & \matr{Q}
\end{smallmatrix}
\right),
\end{array}
\end{equation}
where $\vect{u} = u \ab$ is the abelianization of $u \in \Fn$.
That is, endomorphisms of type II are the result of abelianizing the free part and then applying a linear transformation (and interpret the result in basis $W$).
\end{rem}
Applying this scheme recursively we obtain a concise description for the composition and powers of endomorphisms of type II. 

\begin{prop} \label{prop: comp hom FATF II}
Let $\Phi_{1} = \Phi_{w_{1},\mathbf{r_{1},s_{1},Q_{1},P_{1}}}$
and $\Phi_{2} = \Phi_{w_{2},\mathbf{r_{2},s_{2},Q_{2},P_{2}}}$
be endomorphisms of $\Fn \times \ZZ^m$ of type~II,
and let $u \t[a] \in \Fn \times \Zm$.
Then, 
\begin{equation}
(u \t[a]) \Phi_1 \Phi_2
\,=\,
(\vect{u}, \vect{a})
    \left(
\begin{smallmatrix}
    \vect{s_{1}^{\,\T}} & \matr{P_{1}}\\
    \vect{r_{1}^{\,\T}} & \matr{Q_{1}}
\end{smallmatrix}
\right)
\left(
\begin{smallmatrix}
    \vect{w_{1}}\vect{s_{2}^{\,\T}} & \vect{w_1} \matr{P_{2}}\\
    \vect{r_{2}^{\,\T}} & \matr{Q_{2}}
\end{smallmatrix}
\right),
\end{equation}
where $\vect{u}, \vect{w_1} \in \Zm$ denote the respective abelianizations of $u, w_1 \in \Fn$, and the image is written in basis $W_2 = (w_2,t_1,\ldots ,t_m)$.
\end{prop}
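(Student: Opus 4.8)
The plan is to compose the two maps one linear layer at a time, relying on the concise description~\eqref{eq: hom FATF II concise} of type~II endomorphisms, and then read off the result via associativity of matrix multiplication. Concretely, first I would compute $(u\t[a])\Phi_1$ as an explicit element of $\gen{w_1}\times\Zm$, then feed that element into $\Phi_2$, and finally recombine the two matrix products into one.

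Applying~\eqref{eq: hom FATF II concise} to $\Phi_1$, the element $(u\t[a])\Phi_1$ lies in $\gen{w_1}\times\Zm \isom \ZZ^{m+1}$ and, in the free-abelian basis $W_1 = (w_1,t_1,\ldots,t_m)$, has coordinate vector
\begin{equation*}
(\ell,\vect{b}) \;:=\; (\vect{u},\vect{a})\left(\begin{smallmatrix}\vect{s_1}^{\,\T} & \matr{P_1}\\ \vect{r_1}^{\,\T} & \matr{Q_1}\end{smallmatrix}\right) \;\in\; \ZZ\times\Zm ,
\end{equation*}
so that $(u\t[a])\Phi_1 = w_1^{\ell}\,\t[b]$; note that the abelianization $\rho$ sends this normal form to $(\ell\vect{w_1},\vect{b})$, where $\vect{w_1} = w_1\ab$. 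Now I would apply $\Phi_2$. Since $\Phi_2$ is of type~II it factors through the abelianization (first item of the Remark above), so $(w_1^{\ell}\t[b])\Phi_2$ depends only on $(\ell\vect{w_1},\vect{b})$, and~\eqref{eq: hom FATF II concise} applied to $\Phi_2$ gives
\begin{equation*}
(w_1^{\ell}\t[b])\Phi_2 \;=\; (\ell\vect{w_1},\vect{b})\left(\begin{smallmatrix}\vect{s_2}^{\,\T} & \matr{P_2}\\ \vect{r_2}^{\,\T} & \matr{Q_2}\end{smallmatrix}\right) \;=\; (\ell,\vect{b})\left(\begin{smallmatrix}\vect{w_1}\vect{s_2}^{\,\T} & \vect{w_1}\matr{P_2}\\ \vect{r_2}^{\,\T} & \matr{Q_2}\end{smallmatrix}\right),
\end{equation*}
written in the basis $W_2 = (w_2,t_1,\ldots,t_m)$; the second equality just pulls the scalar $\ell$ out of $\ell\vect{w_1}$ through the top block row (the first coordinate becomes $\ell\,\vect{w_1}\vect{s_2}^{\,\T}+\vect{b}\vect{r_2}^{\,\T}$ and the remaining $m$ coordinates become $\ell\,\vect{w_1}\matr{P_2}+\vect{b}\matr{Q_2}$).

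It then suffices to substitute the value of $(\ell,\vect{b})$ into the last expression and invoke associativity of matrix multiplication to obtain the stated formula. I do not expect any genuine difficulty: the only points deserving care are tracking the change of free-abelian basis from $W_1$ to $W_2$ and the appearance of the row vector $\vect{w_1}$ — which enters precisely because $w_1^{\ell}$ must be abelianized (to $\ell\vect{w_1}$) before the linear part of $\Phi_2$ acts, and this is exactly what turns the blocks $\vect{s_2}^{\,\T}$ and $\matr{P_2}$ into $\vect{w_1}\vect{s_2}^{\,\T}$ and $\vect{w_1}\matr{P_2}$.
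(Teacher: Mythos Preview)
Your proposal is correct and follows essentially the same approach as the paper: apply $\Phi_1$ to obtain an element of $\gen{w_1}\times\Zm$, abelianize (picking up the factor $\vect{w_1}$ in the top row), apply the linear part of $\Phi_2$, and then factor the result as a product of the two block matrices. The only cosmetic difference is that you invoke the concise description~\eqref{eq: hom FATF II concise} for both steps, whereas the paper writes out $\Phi_1$ via the explicit formula~\eqref{eq: hom FATF II} before switching to matrix notation.
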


\begin{proof}
    It is enough combine the expressions \eqref{eq: hom FATF II} and \eqref{eq: hom FATF II concise} for endomorphisms of type II taking into account that the abelianization of a power $w^l \in \Fn$ is $(w^l)\ab = l\, w\ab = l \vect{w} \in \Zn$. Then,
    \begin{align*}
    (u \t[a]) \Phi_1 \Phi_2
    &\,=\,
    \big(w_1^{\vect{u} \vect{s_{1}^{\,\T}} + \vect{a} \vect{r_{1}^{\,\T}}} \ \t[uP_1 + aQ_1] \big) \Phi_2\\
    &\,=\,
    \big(w_1^{\vect{u} \vect{s_{1}^{\,\T}} + \vect{a} \vect{r_{1}^{\,\T}}} \ \t[uP_1 + aQ_1] \big) \rho
    \left(
\begin{smallmatrix}
    \vect{s_{2}^{\,\T}} & \matr{P_{2}}\\
    \vect{r_{2}^{\,\T}} & \matr{Q_{2}}
\end{smallmatrix}
\right)\\
&\,=\,
    \big((\vect{u} \vect{s_{1}^{\,\T}} + \vect{a} \vect{r_{1}^{\,\T}}) \vect{w_1} \, , 
    \vect{u} \matr{P_1} + \vect{a}\matr{Q_1}\big)
    \left(
\begin{smallmatrix}
    \vect{s_{2}^{\,\T}} & \matr{P_{2}}\\
    \vect{r_{2}^{\,\T}} & \matr{Q_{2}}
\end{smallmatrix}
\right)\\
&\,=\,
    (\vect{u} \vect{s_{1}^{\,\T}} + \vect{a} \vect{r_{1}^{\,\T}} ,
    \vect{u} \matr{P_1} + \vect{a}\matr{Q_1})
    \left(
\begin{smallmatrix}
    \vect{w_1} \vect{s_{2}^{\,\T}} & \vect{w_1} \matr{P_{2}}\\
    \vect{r_{2}^{\,\T}} & \matr{Q_{2}}
\end{smallmatrix}
\right)\\
&\,=\,
    (\vect{u}, \vect{a})
    \left(
\begin{smallmatrix}
    \vect{s_{1}^{\,\T}} & \matr{P_{1}}\\
    \vect{r_{1}^{\,\T}} & \matr{Q_{1}}
\end{smallmatrix}
\right)
\left(
\begin{smallmatrix}
    \vect{w_{1}}\vect{s_{2}^{\,\T}} & \vect{w_1} \matr{P_{2}}\\
    \vect{r_{2}^{\,\T}} & \matr{Q_{2}}
\end{smallmatrix}
\right),
\end{align*}
as claimed.
\end{proof}
The expression for powers of endomorphisms of type II follows easily by induction.

\begin{prop} \label{prop: powers hom FATF II}
Let $\Phi = \Phi_{w,\vect{r},\vect{s},\matr{P},\matr{Q}}$ be an endomorphism of $\Fn \times \Zm$ of type II, and let $u \t[a] \in \Fn \times \Zm$. Then, for every $k \geq 1$,
\begin{equation} \label{eq: powers hom FATF II}
    (u\t[a])\Phi^{k}
    \,=\,
    (\vect{u},\vect{a}) 
    \left(
\begin{smallmatrix}
    \vect{s}^{\,\T} & \matr{P}\\
    \vect{r}^{\,\T} & \matr{Q}
\end{smallmatrix}
\right)
\left(
\begin{smallmatrix}
    \vect{w} \vect{s}^{\,\T} & \vect{w} \matr{P}\\
    \vect{r}^{\,\T} & \matr{Q}
\end{smallmatrix}
\right)^{k-1},
\end{equation}
where $\vect{u}, \vect{w} \in \Zm$ denote the respective abelianizations of $u, w \in \Fn$, and the image is written in basis $W = (w,t_1,\ldots ,t_m)$.\qed
\end{prop}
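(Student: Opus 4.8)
The plan is a routine induction on $k$ whose only ingredient is the linear presentation \eqref{eq: hom FATF II concise} of type-II endomorphisms. Write $\matr{M}$ for the matrix $\left(\begin{smallmatrix}\vect{s}^{\,\T} & \matr{P}\\ \vect{r}^{\,\T} & \matr{Q}\end{smallmatrix}\right)$ occurring in \eqref{eq: hom FATF II concise} and $\matr{N}$ for $\left(\begin{smallmatrix}\vect{w}\vect{s}^{\,\T} & \vect{w}\matr{P}\\ \vect{r}^{\,\T} & \matr{Q}\end{smallmatrix}\right)$, so that the claimed identity reads $(u\t[a])\Phi^{k} = (\vect{u},\vect{a})\,\matr{M}\,\matr{N}^{k-1}$, the right-hand side being interpreted in the ordered free-abelian basis $W = (w,t_1,\ldots,t_m)$ of $\gen{w}\times\Zm$. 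The base case $k=1$ is literally \eqref{eq: hom FATF II concise}, since $\matr{N}^{0}$ is the identity.

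The heart of the inductive step is to record how $\Phi$ acts on an element already lying in the abelian subgroup $\gen{w}\times\Zm$. If $w^{c_0}\t[c]$ has $c_0 \in \ZZ$ and $\vect{c}\in\Zm$, then its free part abelianizes to $(w^{c_0})\ab = c_0\vect{w}$, so \eqref{eq: hom FATF II concise} together with a direct comparison of the two relevant block products gives
\[
\Phi\bigl(w^{c_0}\t[c]\bigr) \;=\; (c_0\vect{w},\vect{c})\,\matr{M} \;=\; (c_0,\vect{c})\,\matr{N}.
\]
That is, on $\gen{w}\times\Zm$ and expressed in the basis $W$, the map $\Phi$ is nothing but right-multiplication by $\matr{N}$.

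Granting this, the induction closes at once: assuming $(u\t[a])\Phi^{k} = (\vect{u},\vect{a})\,\matr{M}\,\matr{N}^{k-1}$ as an element of $\gen{w}\times\Zm$ written in basis $W$, one applies $\Phi$ once more and uses the displayed observation to obtain $(u\t[a])\Phi^{k+1} = \bigl((\vect{u},\vect{a})\,\matr{M}\,\matr{N}^{k-1}\bigr)\matr{N} = (\vect{u},\vect{a})\,\matr{M}\,\matr{N}^{k}$. Alternatively, one may simply iterate the composition formula of \Cref{prop: comp hom FATF II} with $w_1=w_2=w$ and all five data equal, which is the same computation packaged differently.

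I do not expect any genuine obstacle; the only point demanding care is the bookkeeping of the change of basis --- an intermediate image $(u\t[a])\Phi^{k}$ must first be rewritten in the basis $W$ before $\Phi$ is applied to it again, and it is precisely this re-expression that makes the factor $\vect{w}$ appear in $\matr{N}$ while it is absent from $\matr{M}$. Checking that the matrix shapes are consistent ($\matr{M}$ is $(n+m)\times(1+m)$ and $\matr{N}$ is $(1+m)\times(1+m)$, so that only $\matr{N}$ may be raised to powers) is the quick sanity check that the formula is even well-formed.
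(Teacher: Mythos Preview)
Your proof is correct and follows essentially the same approach as the paper, which simply states that the formula follows by induction from \Cref{prop: comp hom FATF II}. Your version spells out the key observation---that on $\gen{w}\times\Zm$, written in basis $W$, the map $\Phi$ is right-multiplication by $\matr{N}$---and carefully tracks the change of basis, but this is exactly the content of the composition formula specialized to $\Phi_1=\Phi_2=\Phi$.
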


\section{The Brinkmann problem}\label{sec: Brinkmann}
Orbit problems (consisting in deciding whether two objects belong to the same orbit under certain kind of transformations)
constitute an important family of algorithmic problems in group theory.
They encompass many classical problems, such as the \defin{conjugacy problem} (where the objects are group elements and the transformations are inner automorphisms) or the Whitehead Problems (with variants for elements, subgroups and different kinds of endomorphisms).

In this section we deal with two types of orbit problems involving monogenic subgroups of transformations, initially considered by Brinkmann in \cite{brinkmann_detecting_2010} for automorphisms of the free group $\Fn$. In both cases, $G$ is supposed to be a group given by a finite presentation, and $\mathcal{T} \subseteq \End(G)$ is a family of endomorphisms
(given as a set of words representing the images of the generators).

\begin{named}[Brinkmann Problem for $G$ \wrt $\mathcal{T}$, $\BrP_{\mathcal{T}}(G)$]
Decide,
given an endomorphism $\varphi \in \mathcal{T}$ and two elements $x,y \in G$,
 whether there exists some $k \in \NN$ such that $(x)\varphi^k = y$.
 \end{named}

\begin{named}[Brinkmann Conjugacy Problem for $G$ \wrt $\mathcal{T}$, $\BrCP_{\mathcal{T}}(G)$]
Decide,
given an endomorphism $\varphi \in \mathcal{T}$ and two elements $x,y \in G$,
 whether there exists some $k \in \NN$ such that $(x)\varphi^k$ and $y$ are conjugate in $G$.
 \end{named}

We (generically) abbreviate `Brinkmann Problem' as \BrP, and
`Brinkmann Conjugacy Problem' as \BrCP.
In the table below we summarize the shorthand notations we use for the different versions of the Brinkmann Problem considered in this paper.

\begin{table}[h]
\centering
\begin{tabular}{llll}
& $\Aut(G)$ & $\Mon(G)$ & $\End(G)$ \\ \toprule
 Brinkmann Problem (\BrP) & $\BrPa(G)$ & $\BrPm(G)$ & $\BrPe(G)$ \\ 
 Brinkmann Conjugacy Problem (\BrCP) & $\BrCPa(G)$ & $\BrCPm(G)$ & $\BrCPe(G)$
\end{tabular}
 \caption{Variants of the Brinkmann Problem}
\end{table}

It is clear that
\[
\WP(G) \preceq \BrPa(G) \preceq \BrPm(G) \preceq \BrPe(G)\]
and 
\[
\CP(G) \preceq \BrCPa(G) \preceq \BrCPm(G) \preceq \BrCPe(G). \footnote{We write  $\mathsf{P} \preceq \mathsf{Q}$ to express that $\mathsf{P}$ is Turing reducible to $\mathsf{Q}$.}
\]

\begin{rem} \label{rem: yes outputs}
Note that the decidability of Brinkmann's problems
immediately allows us to theoretically compute a witness exponent $n$ in case it exists: if the answer to the corresponding Brinkmann problem is \yep, then it is enough to keep enumerating the successive images $((x)\varphi^n)_{n \geq 0}$ and use the positive part of word problem (\resp conjugacy problem) to check for a guaranteed match with $y$. 

Moreover, if $\WP(G)$ (\resp $\CP(G)$) is decidable
---as it happens whenever $\id_G\in\mathcal{T}$---
we can perform the previous search sequentially, which allows us to compute minimal witnesses, and from them the full set of witnesses, in the sense made precise below. 
\end{rem}

\begin{defn}
Let $G$ be a group, let $x,y \in G$, and let $\varphi \in \End (G)$. Then, the set of \defin{$\varphi$-logarithms} (\resp \defin{$\varphi\cj$-logarithms}) of $y$ in base $x$ is
\begin{equation*}
    \varphi\text{-\!}\log_x(y)
    \,=\,
    \set{k\geq 0 \st (x)\varphi^k = y}
    \qquad
    \text{(\resp}\varphi\cj\text{-\!}\log_x(y)
    \,=\,
    \set{k\geq 0 \st (x)\varphi^k \conj y}\text{)}
\end{equation*}
Note that $0 \in \varphi\text{-\!}\log_x(y)$ 
(\resp $0 \in \varphi\cj\text{-\!}\log_x(y)$)
if and only if
$x=y$ (\resp $x\conj y$); and then
$
\varphi\text{-\!}\log_x(x) = p\NN$ 
(\resp $
\varphi\cj\text{-\!}\log_x(x) = p\NN$), for some $p \in \NN$, which is called the 
\defin{$\varphi$-period} of~$x$
(\resp
\defin{$\varphi\cj$-period} of~$x$).
In particular we say that the $\varphi$-period
(\resp $\varphi\cj$-period)
of $x$ is $0$ if~$(x)\varphi^k \neq x$
(\resp $(x)\varphi^k \nconj x$) for all $k>0$. 
\end{defn}


In \cite{carvalho_free-abelian_2024}, the authors proved the following general lemmas about the structure and computability of $\varphi$-logarithms and $\varphi\cj$-logarithms:
\begin{lem} \label{lem: philog form}
Let $G$ be a group, let $x,y \in G$, and let $\varphi \in \End (G)$. Then, 
\begin{enumerate}[ind]
    \item \label{item: philog}
    either $\varphi\text{-\!}\log_x(y) = \varnothing$ or $\varphi\text{-\!}\log_x(y) = k_0 + p\NN$,
    where $k_0 = \min (\varphi\text{-\!}\log_x(y))$, and $p$ is the $\varphi$-period of $y$.
    \item \label{item: cphilog}
    either $\varphi\cj\text{-\!}\log_x(y) = \varnothing$ or $\varphi\cj\text{-\!}\log_x(y) = k_0 + p\NN$,
    where $k_0 = \min (\varphi\cj\text{-\!}\log_x(y))$, and $p$ is the $\varphi\cj$-period of $y$. \qed
\end{enumerate}
\end{lem}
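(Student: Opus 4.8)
The plan is to reduce both items to the structure of the ``self-logarithm'' sets $\varphi\text{-\!}\log_y(y)$ and $\varphi\cj\text{-\!}\log_y(y)$ (logarithms of $y$ in base $y$ itself); the two items run in parallel, so I describe~\ref{item: philog} in detail and indicate at the end the single change needed for~\ref{item: cphilog}.

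First, assuming $\varphi\text{-\!}\log_x(y) \neq \varnothing$, I would set $k_0 = \min(\varphi\text{-\!}\log_x(y))$ ---so that, by minimality, every element of the set is $\geq k_0$--- and use the identity, valid for $k \geq k_0$,
\[
    (x)\varphi^{k} \;=\; \bigl((x)\varphi^{k_0}\bigr)\varphi^{k-k_0} \;=\; (y)\varphi^{k-k_0}\,;
\]
this shows that $k \in \varphi\text{-\!}\log_x(y)$ if and only if $(y)\varphi^{k-k_0} = y$, i.e.\ if and only if $k-k_0 \in \varphi\text{-\!}\log_y(y)$, so that $\varphi\text{-\!}\log_x(y) = k_0 + \varphi\text{-\!}\log_y(y)$. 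It then remains to prove that $S := \varphi\text{-\!}\log_y(y)$ equals $p\NN$ with $p$ the $\varphi$-period of $y$ (which also shows that this period is well defined).

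For this I would consider the orbit $y_j := (y)\varphi^{j}$, so that $S = \set{j \geq 0 \st y_j = y}$ is an additive submonoid of $\NN$ containing $0$. If $S = \set{0}$ the $\varphi$-period of $y$ is $0$ and there is nothing more to do; otherwise I would put $p = \min(S\setmin\set{0})$ and observe that $y_p = y$ forces $y_{p+i} = (y_p)\varphi^{i} = (y)\varphi^{i} = y_i$ for all $i \geq 0$, so the orbit $(y_j)_{j\geq 0}$ is \emph{purely} $p$-periodic. Writing an arbitrary $j$ as $j = qp + r$ with $0 \leq r < p$ then gives $y_j = y_r$, whence $j \in S$ iff $r \in S$ iff $r = 0$ (by minimality of $p$); that is, $S = p\NN$, and combining with the previous paragraph, $\varphi\text{-\!}\log_x(y) = k_0 + p\NN$.

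For~\ref{item: cphilog} the only additional ingredient I need is that endomorphisms preserve conjugacy: applying $\varphi$ to a relation $a = g^{-1}bg$ yields $(a)\varphi = (g\varphi)^{-1}(b\varphi)(g\varphi)$. Granting this, the two steps above transfer verbatim with ``$=$'' replaced by ``$\conj$'': applying $\varphi^{k-k_0}$ to a relation $(x)\varphi^{k_0} = g^{-1}yg$ shows $(x)\varphi^{k} \conj (y)\varphi^{k-k_0}$, hence $(x)\varphi^{k}\conj y$ iff $(y)\varphi^{k-k_0}\conj y$; and a single conjugacy return $y_p \conj y$ propagates, by the same manipulation, to $y_{p+i}\conj y_i$ for all $i$, so that $\varphi\cj\text{-\!}\log_y(y) = p\NN$ for the $\varphi\cj$-period $p$ of $y$ and $\varphi\cj\text{-\!}\log_x(y) = k_0 + p\NN$. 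I expect this last propagation to be the only delicate point: since $\varphi$ need not be injective, pure (conjugacy-)periodicity of the orbit cannot be taken for granted and has to be deduced from the existence of a single return time, exactly as above. Everything else is routine bookkeeping.
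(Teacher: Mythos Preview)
Your argument is correct: the reduction $\varphi\text{-\!}\log_x(y) = k_0 + \varphi\text{-\!}\log_y(y)$ via $(x)\varphi^{k} = (y)\varphi^{k-k_0}$, followed by the pure-periodicity argument on the $\varphi$-orbit of $y$, is exactly the natural proof, and the transfer to~\ref{item: cphilog} using that endomorphisms respect conjugacy (so a single return $y_p\conj y$ propagates to $y_{p+i}\conj y_i$) is handled cleanly. The paper does not actually supply a proof of this lemma --- it is quoted from \cite{carvalho_homomorphisms_2023} and closed with a \qed\ --- so there is nothing to compare against beyond noting that your write-up is a standard and complete justification.
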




\begin{lem} \label{lem: logs computable}
    If $\WP(G)$ (\resp $\CP(G)$) is decidable then the decidability of $\BrP(G)$ (\resp $\BrCP(G)$) allows to compute $\varphi\text{-\!}\log_x(y)$ (\resp $\varphi\cj\text{-\!}\log_x(y)$) for every $x,y \in G$. \qed
\end{lem}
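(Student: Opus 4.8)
The plan is to formalize the search sketched in Remark~\ref{rem: yes outputs}, using Lemma~\ref{lem: philog form} to reduce the task to computing finitely many integers. By Lemma~\ref{lem: philog form}\ref{item: philog}, the set $\varphi\text{-\!}\log_x(y)$ is either empty or of the form $k_0 + p\NN$, where $k_0 = \min(\varphi\text{-\!}\log_x(y))$ and $p$ is the $\varphi$-period of $y$ (i.e. the generator of $\varphi\text{-\!}\log_y(y) = p\NN$). Hence it suffices to decide emptiness and, when nonempty, to compute $k_0$ and $p$.

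First, run the algorithm for $\BrP(G)$ on input $(\varphi, x, y)$. If it answers \nop, output $\varphi\text{-\!}\log_x(y) = \varnothing$ and stop. If it answers \yep, then a witness exists, so enumerating $((x)\varphi^k)_{k \geq 0}$ and testing each term for equality with $y$ via the decidable word problem $\WP(G)$ is a terminating procedure; its first success yields $k_0$.

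It remains to compute $p$. The key observation is that $p \geq 1$ precisely when $(y)\varphi^j = y$ for some $j \geq 1$, and this is exactly the query answered by running $\BrP(G)$ on the input $(\varphi,\, (y)\varphi,\, y)$: a witness $k \geq 0$ there means $(y)\varphi^{k+1} = y$. If this call returns \nop, then $p = 0$ and we output $\varphi\text{-\!}\log_x(y) = \{k_0\}$. If it returns \yep, we enumerate $((y)\varphi^j)_{j \geq 1}$ and use $\WP(G)$ to detect the first $j$ with $(y)\varphi^j = y$; this $j$ equals $p$, and we output $k_0 + p\NN$.

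The conjugacy version is entirely parallel: replace equality by conjugacy throughout, $\WP(G)$ by $\CP(G)$, and $\BrP(G)$ by $\BrCP(G)$, invoking Lemma~\ref{lem: philog form}\ref{item: cphilog} in place of~\ref{item: philog}. The only genuine subtlety is the detection of periodicity: since $\BrP$ (\resp $\BrCP$) as stated only decides existence of an exponent in $\NN$ --- which always contains $0$, making $y$ trivially reachable from itself --- one must shift the base by one application of $\varphi$ to turn ``$y$ recurs along its orbit'' into an honest Brinkmann query. Everything else is a bounded search whose termination is guaranteed by the positive part of $\WP(G)$ (\resp $\CP(G)$) together with the already-established nonemptiness.
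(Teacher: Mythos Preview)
Your proof is correct and follows essentially the approach the paper itself sketches in Remark~\ref{rem: yes outputs} (the lemma is stated without proof here, being imported from \cite{carvalho_homomorphisms_2023}). In particular, your handling of the period computation --- shifting the base to $(y)\varphi$ so that the query ``does $y$ recur'' becomes a genuine instance of $\BrP$ --- is exactly the right way to make the sketch precise.
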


In the remainder of this section we study the Brinkmann's problems $\BrP$ and $\BrCP$ in the context of \FATF\ groups. 

In \cite{carvalho_free-abelian_2024} we extend the description of endomorphisms from FATF groups to free-abelian by free (FABF) groups (of the form $\Fn \ltimes \Zm$). It turns out that the obtained classification scheme naturally extends that in \Cref{prop: endos FATF}, splitting again in types I and II, which when restricted to the direct product case correspond exactly to types I and II for FATF groups. Moreover, in this work, a general argument is provided to prove the decidability of \BrP\ for the class of all endomorphisms of FABF (and hence of FATF) groups of type I.
Accordingly, below we only provide a proof for the decidability of $\BrP(\Fn \times \Zm)$ for the remaining case of endomorphisms (of FATF groups) of type II. On the other side, $\BrCPe(\Fn \times \Zm)$ must be considered in full generality, since the $\CP$ (and hence the $\BrCPe$) is known to be undecidable for FABF groups.

A first natural step is to consider the aforementioned problems on its factors, namely for free and free-abelian groups.
We recall that Brinkmann's original result in \cite{brinkmann_detecting_2010} is that both $\BrPa(\Fn)$ and $\BrCPa(\Fn)$ are algorithmically decidable. The decidability of the general case (of endomorphisms of~$\Fn$) was finally obtained in \cite{carvalho_decidability_2024} building on the work of  Logan \cite{logan_conjugacy_2023}, who proves the decidability of  $\BrPm(\Fn)$ and $\BrCPm(\Fn)$ (among other variations) using recent techniques by Mutanguha extending to endomorphisms the computability of the fixed subgroup of endomorphisms of $\Fn$ \cite{mutanguha_constructing_2022}. 

\begin{thm} \label{thm: BrPe free}
Both $\BrPe(\Fn)$ and $\BrCPe(\Fn)$ are algorithmically decidable. \qed
\end{thm}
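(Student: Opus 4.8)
The plan is to prove the two statements together, in three layers of increasing generality that mirror the three contributions cited below the statement. The innermost layer is the automorphism case, $\BrPa(\Fn)$ and $\BrCPa(\Fn)$, which is Brinkmann's original theorem \cite{brinkmann_detecting_2010}: given $\varphi\in\Aut(\Fn)$ one takes a relative train-track representative and uses it to control how the cyclically reduced lengths $\lvert x\varphi^{k}\rvert$ grow with $k$; this confines the possible witness exponents to an explicitly computable range, after which each candidate is tested against $y$ using the word problem (resp.\ the conjugacy problem) of $\Fn$.

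The middle layer upgrades this to monomorphisms, $\BrPm(\Fn)$ and $\BrCPm(\Fn)$, following Logan \cite{logan_conjugacy_2023}. Here one encodes the orbit data inside the ascending HNN extension $G_{\varphi}=\langle\, \Fn,\,t \mid t^{-1}wt = w\varphi \ (w\in\Fn)\,\rangle$, in which $t^{-k}x\,t^{k}=x\varphi^{k}$; thus $x\varphi^{k}=y$ and $x\varphi^{k}\sim_{\Fn} y$ translate into instances of the conjugacy problem of $G_{\varphi}$, with the conjugator constrained to lie in $\langle t\rangle\cdot\Fn$. Extending the Bogopolski--Martino--Ventura machinery \cite{bogopolski_orbit_2010} (designed for free-by-cyclic groups, i.e.\ $\varphi$ an automorphism) to these genuinely ascending extensions is carried out by feeding in, in place of train tracks, Mutanguha's recent results: the computability of the fixed subgroup and of the stable image of an endomorphism of $\Fn$ \cite{mutanguha_constructing_2022}. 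These are what make the bounded-search step survive the loss of an ambient automorphism.

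The outermost layer reduces an arbitrary $\varphi\in\End(\Fn)$ to the monomorphism case; this is \cite{carvalho_decidability_2023}. The ranks of the nested images $\Fn\supseteq\Fn\varphi\supseteq\Fn\varphi^{2}\supseteq\cdots$ are non-increasing, so iterating Stallings foldings one can compute the least $N$ with $\mathrm{rk}(\Fn\varphi^{N})=\mathrm{rk}(\Fn\varphi^{N+1})$. Since $\varphi$ maps $M:=\Fn\varphi^{N}$ \emph{onto} $\Fn\varphi^{N+1}$, a free group of the same finite rank, and finitely generated free groups are Hopfian, this restriction is an isomorphism; in particular $\varphi$ is injective on the computable, $\varphi$-invariant, finitely generated free subgroup $M$, and $x\varphi^{k}\in M$ for every $k\ge N$. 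One then tests the finitely many exponents $k\le N$ directly (via $\WP(\Fn)$, resp.\ $\CP(\Fn)$), while for $k\ge N$ the question is an instance of $\BrPm(M)$ (with endomorphism $\psi:=\varphi|_{M}$, base point $z:=x\varphi^{N}\in M$, and target $y$, after checking the decidable condition $y\in M$). In the conjugacy version one additionally replaces $y$ by the finitely many (computable) $M$-conjugacy representatives of the elements of $M$ that are $\Fn$-conjugate to $y$, and takes the disjunction of the corresponding instances of $\BrCPm(M)$. By \Cref{lem: philog form} the answer sets have the stated periodic shape, and \Cref{rem: yes outputs} then yields an explicit witness whenever one exists.

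The main obstacle is the middle layer. As soon as $\varphi$ fails to be surjective the group $\Fn$ is no longer permuted by $\varphi$, train-track technology is unavailable, and a blind search for a witness exponent has no a priori bound; making the Bogopolski--Martino--Ventura strategy go through in this setting is exactly what requires Mutanguha's computability of stable images and fixed subgroups. By comparison, the automorphism base case and the endomorphism-to-monomorphism reduction (including the passage from $\Fn$-conjugacy to $M$-conjugacy) are essentially bookkeeping.
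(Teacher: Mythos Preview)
Your three-layer outline is correct and matches exactly the chain of references the paper invokes: Brinkmann for automorphisms, Logan (via Mutanguha) for monomorphisms, and \cite{carvalho_decidability_2023} for the reduction of general endomorphisms to monomorphisms. Note that the paper gives \emph{no} proof of this theorem at all --- it is stated with a bare \qed after the paragraph citing these sources --- so your sketch is strictly more detailed than the paper's own treatment, not different from it.

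Two minor remarks on the sketch itself. First, your description of the middle layer slightly inverts the logical flow of \cite{logan_conjugacy_2023}: Logan establishes $\BrPm(\Fn)$ and $\BrCPm(\Fn)$ directly from Mutanguha's computability of stable images and fixed subgroups, and then \emph{deduces} the conjugacy problem for ascending HNN extensions from these (together with $\TCP$), rather than encoding the Brinkmann problems inside the HNN extension and solving $\CP$ there. Second, in the outermost layer your passage ``replace $y$ by the finitely many computable $M$-conjugacy representatives of the elements of $M$ that are $\Fn$-conjugate to $y$'' is correct but is itself a nontrivial algorithmic fact (finiteness and computability of $M\cap y^{\Fn}$ up to $M$-conjugacy for $M\leqslant\Fn$ finitely generated); it is indeed known, via Stallings graphs, and is handled in \cite{carvalho_decidability_2023}, but deserves an explicit citation rather than being absorbed into ``bookkeeping''. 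Neither point affects the validity of your outline.
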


The corresponding result for free-abelian groups was obtained  in \cite{kannan_polynomial-time_1986}, where \citeauthor{kannan_polynomial-time_1986} provide  a polynomial algorithm to solve $\BrPe(\QQ^m)$. 
As detailed in \cite{carvalho_free-abelian_2024} it is immediate to extend this result to the family of affine transformations
of $\Zm$, which we denote by $\Aff(\Zm)$.

\begin{thm}[\citenr{kannan_polynomial-time_1986}]
\label{thm: KannanLipton}
The Brinkmann problem $\BrP_{\!\operatorname{Aff}}(\Zm)$ is algorithmically decidable. \qed 
\end{thm}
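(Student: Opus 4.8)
The plan is to reduce $\BrP_{\Aff}(\Zm)$ to the orbit problem over $\QQ$ in dimension $m+1$ --- that is, to $\BrPe(\QQ^{m+1})$ --- which is decidable by the polynomial-time algorithm of Kannan and Lipton cited above. The device is the classical homogenization trick: an affine self-map of $\Zm$ becomes an honest \emph{linear} map once we adjoin a single extra coordinate. (We read ``$\BrP$'' for the family $\Aff(\Zm)$ in the obvious way, allowing $\varphi$ to be a not-necessarily-homomorphic self-map, since the condition ``$\exists k \geq 0 \colon (\vect{x})\varphi^k = \vect{y}$'' still makes literal sense.)

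In detail, write a general element of $\Aff(\Zm)$ as $\varphi\colon \vect{x} \mapsto \vect{x}\matr{A} + \vect{b}$ with $\matr{A} \in \mathcal{M}_m(\ZZ)$ and $\vect{b} \in \Zm$, and embed $\Zm \hookrightarrow \QQ^{m+1}$ via $\vect{x} \mapsto (\vect{x},1)$. Let $\widehat{\varphi} \in \End(\QQ^{m+1})$ be the linear map given, in the paper's right-action convention, by the block matrix
\[
\widehat{\matr{A}} \;=\; \left(\begin{smallmatrix} \matr{A} & \vect{0}^{\,\T} \\ \vect{b} & 1 \end{smallmatrix}\right).
\]
Then $(\vect{x},1)\widehat{\matr{A}} = (\vect{x}\matr{A}+\vect{b},\, 1) = ((\vect{x})\varphi,\, 1)$ --- the homogenizing coordinate stays equal to $1$ by construction --- so, by induction, $(\vect{x},1)\,\widehat{\matr{A}}^{\,k} = ((\vect{x})\varphi^{k},\, 1)$ for every $k \geq 0$. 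Consequently, for any target $\vect{y} \in \Zm$ we have $(\vect{x})\varphi^{k} = \vect{y}$ (in $\Zm$) if and only if $(\vect{x},1)\,\widehat{\matr{A}}^{\,k} = (\vect{y},1)$ (in $\QQ^{m+1}$). The algorithm is then immediate: on input $(\varphi,\vect{x},\vect{y})$, compute $\widehat{\matr{A}}$, $(\vect{x},1)$ and $(\vect{y},1)$, and output the answer of the Kannan--Lipton algorithm on this instance of $\BrPe(\QQ^{m+1})$.

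The hard part, such as it is, lies entirely inside the cited Kannan--Lipton theorem: their algorithm for $\BrPe(\QQ^{m})$ rests on effective bounds from transcendental number theory (linear forms in logarithms of algebraic numbers) needed to decide when the orbit of a rational matrix can meet a prescribed point. The reduction above introduces no new difficulty; the only facts to verify are the two displayed identities --- that homogenization turns $\varphi^k$ into the matrix power $\widehat{\matr{A}}^{\,k}$, and that the resulting rational instance faithfully records the original integer problem --- and both are routine.
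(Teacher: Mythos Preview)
Your proposal is correct and matches what the paper has in mind: the paper does not actually give a proof here (the theorem is stated with an immediate \qed), but simply cites Kannan--Lipton for $\BrPe(\QQ^m)$ and remarks that ``it is immediate to extend this result to the family of affine transformations of $\Zm$'', referring to \cite{carvalho_decidability_2023} for details. Your homogenization argument --- adjoin a constant coordinate so that the affine map $\vect{x}\mapsto\vect{x}\matr{A}+\vect{b}$ becomes the linear map with block matrix $\left(\begin{smallmatrix}\matr{A}&\vect{0}^{\T}\\\vect{b}&1\end{smallmatrix}\right)$ --- is exactly the standard ``immediate'' reduction being alluded to.
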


Below, we combine \Cref{thm: KannanLipton,thm: BrPe free} with our description of powers of endomorphisms of type II 
to prove the decidability of $\BrP$ for this latest family. This, together with \cite[Theorem 4.11]{carvalho_free-abelian_2024} combine in the result  below.

\begin{thm} \label{thm: BrPe FATF}
    Let $n,m\in \NN$. Then, the Brinkmann Problem $\BrPe(\Fn \times \Zm)$ is decidable.
\end{thm}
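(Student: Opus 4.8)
The plan is to split into the two types of endomorphisms given by \Cref{prop: endos FATF}, using the fact that one can decide algorithmically which type $\Phi$ is (the type is read off from the images of the generators). For endomorphisms of type I, invoke \cite[Theorem 4.11]{carvalho_homomorphisms_2023}, so nothing new is needed there. For endomorphisms of type II, I would use the concise description in \eqref{eq: hom FATF II concise} and the power formula \eqref{eq: powers hom FATF II}: given $\Phi = \Phi_{w,\vect{r},\vect{s},\matr{P},\matr{Q}}$ and inputs $x = u\t[a]$, $y = v\t[b]$, we want to decide whether $(u\t[a])\Phi^k = v\t[b]$ for some $k \geq 1$ (the case $k=0$ is just the word problem, asking $x = y$).

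The key observation is that by \eqref{eq: powers hom FATF II}, for $k\geq 1$ the image $(u\t[a])\Phi^k$ lies in $\gen{w}\times\Zm$, which is free-abelian of rank $m+1$ with basis $W = (w,t_1,\ldots,t_m)$. So first I check that $y$ itself lies in $\gen{w}\times\Zm$: compute $v\in\Fn$ (its normal form) and test whether $v$ is a power $w^\ell$ of $w$ — this is decidable in $\Fn$, and if it fails we answer \nop\ (unless $x=y$). If $y = w^\ell \t[b]$, then in coordinates with respect to $W$ the target is the vector $(\ell,\vect{b})\in\ZZ^{m+1}$. Now \eqref{eq: powers hom FATF II} expresses $(u\t[a])\Phi^k$ in $W$-coordinates as
\begin{equation*}
(\vect{u},\vect{a})
\left(\begin{smallmatrix}\vect{s}^{\,\T}&\matr{P}\\\vect{r}^{\,\T}&\matr{Q}\end{smallmatrix}\right)
\left(\begin{smallmatrix}\vect{w}\vect{s}^{\,\T}&\vect{w}\matr{P}\\\vect{r}^{\,\T}&\matr{Q}\end{smallmatrix}\right)^{k-1},
\end{equation*}
which is exactly the orbit of the fixed initial vector $(\vect{u},\vect{a})\left(\begin{smallmatrix}\vect{s}^{\,\T}&\matr{P}\\\vect{r}^{\,\T}&\matr{Q}\end{smallmatrix}\right)\in\ZZ^{m+1}$ under iteration of the linear map given by the $(m+1)\times(m+1)$ integer matrix $\matr{M} = \left(\begin{smallmatrix}\vect{w}\vect{s}^{\,\T}&\vect{w}\matr{P}\\\vect{r}^{\,\T}&\matr{Q}\end{smallmatrix}\right)$. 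Hence deciding whether some $k\geq 1$ works amounts to asking whether the target vector $(\ell,\vect{b})$ lies in the forward orbit $\{\,\vect{z}\,\matr{M}^{j} : j\geq 0\,\}$ of $\vect{z} = (\vect{u},\vect{a})\left(\begin{smallmatrix}\vect{s}^{\,\T}&\matr{P}\\\vect{r}^{\,\T}&\matr{Q}\end{smallmatrix}\right)$, which is a linear (indeed affine, with trivial translation) orbit problem over $\ZZ^{m+1}$. This is precisely what \Cref{thm: KannanLipton} settles — $\BrP_{\!\operatorname{Aff}}(\ZZ^{m+1})$ is decidable — so we get our answer. (Care is needed about the off-by-one between $k$ and the exponent $k-1$ of $\matr{M}$, but that is harmless: we run the affine orbit algorithm on the instance with start vector $\vect{z}$ and target $(\ell,\vect{b})$.)

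I expect the only genuinely delicate point to be the bookkeeping between the group-theoretic statement and its linearization — specifically (i) correctly handling the boundary cases $k=0$ versus $k\geq 1$, since for $k=0$ the element need not be in $\gen{w}\times\Zm$ and one just invokes $\WP(\Fn\times\Zm)$, which is decidable; (ii) the reduction of the $\Fn$-side question "is $y$ of the form $w^\ell\t[b]$?" to an instance of the (decidable) membership/power problem in $\Fn$; and (iii) making sure the matrix $\matr{M}$ and start vector $\vect{z}$ are effectively computable from the input data $w,\vect{r},\vect{s},\matr{P},\matr{Q}$ and $u,\vect{a}$ — which they are, since $\vect{u} = u\ab$ and $\vect{w} = w\ab$ are obtained by abelianization. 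Once these are in place, the result follows by combining \Cref{thm: KannanLipton} with \cite[Theorem 4.11]{carvalho_homomorphisms_2023}.
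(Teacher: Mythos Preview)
Your proposal is correct and follows essentially the same route as the paper: split by endomorphism type, invoke \cite[Theorem~4.11]{carvalho_homomorphisms_2023} for type~I, and for type~II use the power formula \eqref{eq: powers hom FATF II} to reduce (after checking $v\in\gen{w}$) to an orbit instance in $\ZZ^{m+1}$ handled by \Cref{thm: KannanLipton}. If anything, you are slightly more careful than the paper in explicitly isolating the $k=0$ case before passing to $\gen{w}\times\Zm$.
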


\begin{proof}
    Since decidability of $\BrP$ for endomorphisms of type I was already obtained in \cite{carvalho_free-abelian_2024} for the more general family of FABF groups (of the form $\Fn \ltimes \Zm$), below we only prove the decidability of $\BrP(\Fn \times \Zm)$ for the remaining case of endomorphisms (of FATF groups) of type II.
    That is, given $u\t[a],v\t[b] \in \GG = \Fn \times \Zm$ and $\Phi = \Phi_{w,\vect{r},\vect{s},\matr{Q},\matr{P}} \in \Endii(\GG)$, our goal is to decide whether
    \begin{equation} \label{eq: BrPII(G)}
    \exists k \in \NN \text{ such that } (u \t[a]) \Phi^k = v \t[b].
    \end{equation}
  It is clear from \Cref{prop: powers hom FATF II} that the free part of $(u \t[a]) \Phi^k$  must belong to the cyclic subgroup $\gen{w} \leqslant \Fn$. Hence, we start using the decidability of $\MP(\Fn)$ to check whether $v \in \gen{w}$. If the answer is \nop, then the answer to \eqref{eq: BrPII(G)} is \nop\ as well. Otherwise, $u \in \gen{w}$ and we can compute the integer $l \in \ZZ$ such that $v = w^l$. That is $v \t[b] \in \gen{w} \times \ZZ^m$, and the expression of $v \t[b]$ in base $W = (w,t_1,\ldots,t_n)$ is $(l,\vect{b}) \in \ZZ^{m+1}$. Then, using the expression \eqref{eq: powers hom FATF II} for powers of endomorphisms of type II, the condition \eqref{eq: BrPII(G)} takes the form:
  \begin{equation}
    \exists k \in \NN \text{ such that }
    (\vect{u},\vect{a}) 
    \left(
\begin{smallmatrix}
    \vect{s}^{\,\T} & \matr{P}\\
    \vect{r}^{\,\T} & \matr{Q}
\end{smallmatrix}
\right)
\left(
\begin{smallmatrix}
    \vect{w} \vect{s}^{\,\T} & \vect{w} \matr{P}\\
    \vect{r}^{\,\T} & \matr{Q}
\end{smallmatrix}
\right)^{k-1} = (l,\vect{b}) \,,
    \end{equation}
which (since all the involved data is deducible from the input) is just an instance of $\BrP(\ZZ^{m+1})$, and hence decidable by \Cref{thm: KannanLipton}.
\end{proof}

The technical lemma below will be useful in order to 
study the $\BrCP$ for endomorphisms of $\GG = \Fn \times \Zm$ of type I. Recall that 
if $\Phi = \Phi_{\varphi,\matr{Q},\matr{P}} \in \Endi(\GG)$, then $(u \t[a])\Phi = u \varphi \ \t^{\vect{a} \matr{Q} + u\ab \matr{P}}$,
where $u\ab \in \ZZ^n$ denotes the abelianization of $u \in \Fn$.


\begin{lem}\label{lem: powers conj periodic affine FATF}
Let $\GG$ be a $\FATF$ group, $\Phi_{\varphi,\matr{Q},\matr{P}}\in \Endi(\GG)$, and $k,p\in \NN$ and $u\in \Fn$ be such that 
$u \varphi^{k+p} \conj u \varphi^{k}$.
Then, for all $\lambda\geq 1$ and $\vect{a}\in \Zm$, 
\begin{equation} \label{eq: powers affine FATF}
(u\ta{a})\Phi^{k+\lambda p}\tau=((u\ta{a})\Phi^k\tau)\widetilde{T}^\lambda
\end{equation}
where $\widetilde{T}:\Zm\to\Zm$ is the affine transformation
$\vect{x}\mapsto \vect{x
}\matr{Q}^p
+ (u \varphi^{k})\ab \, \matr{P}^{(\!p\!)}$.
\end{lem}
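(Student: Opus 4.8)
The proof is a direct computation from the expression for powers of type-I endomorphisms in \Cref{prop: powers FATF I}; the hypothesis $u\varphi^{k+p}\conj u\varphi^{k}$ enters only to force the relevant abelianizations to stabilize along the orbit, which is what turns a product of a~priori different affine maps into a genuine iterate of $\widetilde{T}$.

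First I would observe that, since $\varphi^{p}$ is an endomorphism of $\Fn$ and endomorphisms preserve conjugacy, the relation $u\varphi^{k+p}\conj u\varphi^{k}$ iterates: applying $\varphi^{p}$ repeatedly gives $u\varphi^{k+\lambda p}\conj u\varphi^{k}$ for every $\lambda\geq 0$. As conjugate elements of a free group have equal abelianizations, this yields $(u\varphi^{k+\lambda p})\ab=(u\varphi^{k})\ab$ for all $\lambda\geq 0$; this is the only point where the hypothesis is used.

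Next, writing $\vect{u}=u\ab$, \Cref{prop: powers FATF I} gives $\Phi^{j}=\Phi_{\varphi^{j},\matr{Q}^{j},\matr{P}^{(j)}}$ for every $j\geq 0$, so $(u\ta{a})\Phi^{j}=u\varphi^{j}\,\t^{\vect{a}\matr{Q}^{j}+\vect{u}\matr{P}^{(j)}}$ and hence $(u\ta{a})\Phi^{j}\tau=\vect{a}\matr{Q}^{j}+\vect{u}\matr{P}^{(j)}$. Applying the composition formula of \Cref{prop: powers FATF I} to $\Phi^{j+p}=\Phi^{j}\Phi^{p}$ (and using $(\varphi^{j})\ab=(\varphi\ab)^{j}$) produces the matrix identity $\matr{P}^{(j+p)}=\matr{P}^{(j)}\matr{Q}^{p}+(\varphi\ab)^{j}\matr{P}^{(p)}$ --- which one may alternatively check by splitting the sum $\matr{P}^{(j+p)}=\sum_{i=1}^{j+p}(\varphi\ab)^{i-1}\matr{P}\matr{Q}^{j+p-i}$ at $i=j$. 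Substituting it into the previous expression and regrouping gives the one-step recursion
\begin{equation*}
(u\ta{a})\Phi^{j+p}\tau
\,=\,
\big((u\ta{a})\Phi^{j}\tau\big)\matr{Q}^{p}+(u\varphi^{j})\ab\,\matr{P}^{(p)}
\qquad (j\geq 0).
\end{equation*}

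Finally I would conclude by induction on $\lambda\geq 1$. For $\lambda=1$, the one-step recursion with $j=k$ reads $(u\ta{a})\Phi^{k+p}\tau=\widetilde{T}\big((u\ta{a})\Phi^{k}\tau\big)$, since $(u\varphi^{k})\ab\,\matr{P}^{(p)}$ is precisely the translation part of $\widetilde{T}$. For the inductive step, apply the one-step recursion with $j=k+(\lambda-1)p$: by the stabilization of abelianizations noted above, $(u\varphi^{k+(\lambda-1)p})\ab=(u\varphi^{k})\ab$, so the right-hand map is again $\widetilde{T}$, and hence $(u\ta{a})\Phi^{k+\lambda p}\tau=\widetilde{T}\big((u\ta{a})\Phi^{k+(\lambda-1)p}\tau\big)=\widetilde{T}^{\lambda}\big((u\ta{a})\Phi^{k}\tau\big)$ by the induction hypothesis. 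This is exactly~\eqref{eq: powers affine FATF}. There is no real obstacle beyond bookkeeping around \Cref{prop: powers FATF I}; the one thing to notice is that the conjugacy hypothesis is what makes the a~priori distinct step-$p$ affine maps all collapse to the single transformation $\widetilde{T}$, so that the lemma is legitimately about iterating one fixed affine map.
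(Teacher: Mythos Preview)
Your proof is correct and follows essentially the same route as the paper: first observe that the conjugacy hypothesis forces $(u\varphi^{k+\lambda p})\ab=(u\varphi^{k})\ab$ for all $\lambda$, then derive the one-step recursion $(u\ta{a})\Phi^{j+p}\tau=((u\ta{a})\Phi^{j}\tau)\matr{Q}^{p}+(u\varphi^{j})\ab\,\matr{P}^{(p)}$ and conclude by induction. The only cosmetic difference is that the paper obtains the recursion directly by applying the type-I formula for $\Phi^{p}=\Phi_{\varphi^{p},\matr{Q}^{p},\matr{P}^{(p)}}$ to the element $(u\ta{a})\Phi^{k+(\lambda-1)p}$, whereas you route it through the matrix identity $\matr{P}^{(j+p)}=\matr{P}^{(j)}\matr{Q}^{p}+(\varphi\ab)^{j}\matr{P}^{(p)}$.
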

\begin{proof}
Since $u\varphi^k\varphi^p\sim u\varphi^k$ then, for all $\lambda\in\NN$, $u\varphi^{k+\lambda p}\sim u\varphi^k$, and hence $(u\varphi^{k+\lambda p})\ab =  (u\varphi^k) \ab$. In particular, from \eqref{eq: hom FATF I} and \eqref{eq: powers hom FATF I}:
\[
\begin{aligned} \label{eq: periodic affine}
(u\ta{a})\Phi^{k + \lambda p}\tau
&\,=\,
(u\ta{a})\Phi^{k + (\lambda-1) p} \Phi^{p}\tau\\
&\,=\,
(u\ta{a})\Phi^{k + (\lambda-1) p} \tau \,\matr{Q}^p +
(u \varphi^{k+ (\lambda - 1) p})\ab \, \matr{P}^{(p)}\\
&\,=\,
(u\ta{a})\Phi^{k + (\lambda-1) p}  \tau \,\matr{Q}^p +
(u \varphi^{k})\ab \, \matr{P}^{(p)}
\end{aligned}
\]
and the claimed result follows.
\end{proof}





\begin{thm} \label{thm: BrCPm(FATF)}
Let $n,m\in \NN$. Then, the Brinkmann Conjugacy Problem $\BrCPe(\Fn \times \Zm)$ is decidable.
\end{thm}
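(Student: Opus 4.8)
The plan is to treat the two types of endomorphisms of $\GG = \Fn \times \Zm$ from \Cref{prop: endos FATF} separately, reducing each to the Brinkmann problem for affine transformations of a free-abelian group (\Cref{thm: KannanLipton}) together with the decidability of $\BrCPe(\Fn)$ (\Cref{thm: BrPe free}). We may assume $n \geq 2$ and $m \geq 1$: for $m = 0$ the statement is \Cref{thm: BrPe free}, and for $n \leq 1$ the group $\GG$ is free-abelian, so conjugacy is trivial and $\BrCP(\GG) = \BrPe(\GG)$, decidable by \Cref{thm: KannanLipton}. Recall that $p\t[c] \conj q\t[d]$ in $\GG$ if and only if $p \conj q$ in $\Fn$ and $\vect{c} = \vect{d}$, since the free-abelian part is central; hence $\CP(\GG)$ reduces to $\CP(\Fn)$ and is decidable, which disposes of the exponent $k = 0$, and from now on we look for some $k \geq 1$.

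\textbf{Endomorphisms of type~II.} Let $\Phi = \Phi_{w,\vect{r},\vect{s},\matr{Q},\matr{P}} \in \Endii(\GG)$ and $u\t[a], v\t[b] \in \GG$. By \Cref{prop: powers hom FATF II}, for $k \geq 1$ the element $(u\t[a])\Phi^{k}$ lies in $\gen{w} \times \Zm$ with free part a power of $w$; so it can be conjugate to $v\t[b]$ only if $v$ is conjugate in $\Fn$ to a power of $w$. This is decidable, and since $w$ is not a proper power the exponent $j_{0}$ with $v \conj w^{j_{0}}$ is unique and computable when it exists; if it does not, the answer is \nop. Otherwise, expressing elements of $\gen{w}\times\Zm$ in the free-abelian basis $W = (w, t_{1}, \ldots, t_{m})$, the condition $(u\t[a])\Phi^{k} \conj v\t[b]$ becomes, by \eqref{eq: powers hom FATF II},
\begin{equation*}
(\vect{u},\vect{a}) \left(\begin{smallmatrix}\vect{s}^{\,\T} & \matr{P}\\ \vect{r}^{\,\T} & \matr{Q}\end{smallmatrix}\right)\left(\begin{smallmatrix}\vect{w}\vect{s}^{\,\T} & \vect{w}\matr{P}\\ \vect{r}^{\,\T} & \matr{Q}\end{smallmatrix}\right)^{k-1} = (j_{0},\vect{b}),
\end{equation*}
which is an instance of $\BrP(\ZZ^{m+1})$ for the linear map given by the rightmost matrix, decidable by \Cref{thm: KannanLipton}.

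\textbf{Endomorphisms of type~I.} Let $\Phi = \Phi_{\varphi,\matr{Q},\matr{P}} \in \Endi(\GG)$ and $u\t[a], v\t[b] \in \GG$. Here $(u\t[a])\Phi^{k} = u\varphi^{k}\,\t^{\vect{a}\matr{Q}^{k} + u\ab\,\matr{P}^{(k)}}$, so $(u\t[a])\Phi^{k} \conj v\t[b]$ if and only if $u\varphi^{k} \conj v$ in $\Fn$ and $\vect{a}\matr{Q}^{k} + u\ab\,\matr{P}^{(k)} = \vect{b}$. Using \Cref{thm: BrPe free} and \Cref{lem: logs computable} (the latter applies since $\CP(\Fn)$ is decidable) we compute $\varphi\cj\text{-\!}\log_{u}(v)$, the set of exponents meeting the first condition, which by \Cref{lem: philog form} is empty, a single exponent $\{k_{0}\}$, or an arithmetic progression $k_{0} + p\NN$ with $p \geq 1$. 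If it is empty the answer is \nop; if it is $\{k_{0}\}$ it remains to compute $\vect{a}\matr{Q}^{k_{0}} + u\ab\,\matr{P}^{(k_{0})}$ and compare it with $\vect{b}$. If it is $k_{0} + p\NN$, then $u\varphi^{k_{0}+p} \conj u\varphi^{k_{0}}$ (both are conjugate to $v$), so \Cref{lem: powers conj periodic affine FATF} applies with this $k_{0}$ and $p$, yielding for every $\lambda \geq 0$
\begin{equation*}
(u\t[a])\Phi^{k_{0}+\lambda p}\tau = \big((u\t[a])\Phi^{k_{0}}\tau\big)\,\widetilde{T}^{\lambda},
\end{equation*}
where $\widetilde{T}\colon \vect{x} \mapsto \vect{x}\matr{Q}^{p} + (u\varphi^{k_{0}})\ab\,\matr{P}^{(p)}$ is an affine transformation of $\Zm$ computable from the input (note that $u\varphi^{k_{0}}$ is obtained by iterating $\varphi$). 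As the exponents $k$ with $u\varphi^{k} \conj v$ are exactly the $k_{0}+\lambda p$, the problem reduces to deciding whether $\widetilde{T}^{\lambda}$ sends the computable point $(u\t[a])\Phi^{k_{0}}\tau$ to $\vect{b}$ for some $\lambda \geq 0$: an instance of $\BrP_{\operatorname{Aff}}(\Zm)$, decidable by \Cref{thm: KannanLipton}.

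I expect the main obstacle to be the type~I case. The free parts $u\varphi^{k}$ need not be eventually periodic, only \emph{periodic up to conjugacy} along an arithmetic progression of exponents; the point --- precisely the content of \Cref{lem: powers conj periodic affine FATF} --- is that along that progression the abelianization $(u\varphi^{k_{0}+\lambda p})\ab$ stabilizes, so the free-abelian coordinate then evolves under one fixed affine self-map of $\Zm$, bringing us into the scope of \Cref{thm: KannanLipton}. The type~II case is comparatively routine, thanks to the abelian image of such endomorphisms, which makes their powers governed by a single integer matrix. Everything else --- recognizing conjugacy in $\GG$, computing the relevant $\varphi\cj$-logarithms in $\Fn$, and assembling the reductions --- is bookkeeping on top of \Cref{thm: BrPe free} and \Cref{thm: KannanLipton}.
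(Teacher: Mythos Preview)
Your proposal is correct and follows essentially the same approach as the paper: split by endomorphism type, reduce type~I to $\BrCPe(\Fn)$ plus \Cref{lem: powers conj periodic affine FATF} and then $\BrP_{\operatorname{Aff}}(\Zm)$, and reduce type~II to $\BrP(\ZZ^{m+1})$ via \Cref{prop: powers hom FATF II}. The only differences are cosmetic --- you treat the degenerate ranks and the $k=0$ case up front and present type~II before type~I --- and your explicit handling of the singleton case $\varphi\cj\text{-\!}\log_{u}(v)=\{k_{0}\}$ is a nice touch the paper leaves implicit.
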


\begin{proof}
Let $u\ta{a},v\ta{b}$ be two elements in $\GG = \Fn\times\Zm$
and let $\Phi \in \End(\GG)$.
Our goal is to decide whether there exists some $k \in \NN$ such that $(u\t[a])\Phi^k \conj v \t[b]$.
Since $u \t[a]$ and $v \t[b]$ are conjugate in $\Fn \times \Zm$ if and only if $u \conj v$ in $\Fn$ and $\vect{a} = \vect{b}$, splitting the free and free-abelian parts our problem becomes to decide whether there exists some $k \in \ZZ$ such that the two conditions below hold:
\begin{align}[left=\empheqlbrace]
    (u \t[a]) \Phi^k \pi &\,\sim\, v\label{eq: BrCP FATF fr}\\\hspace{2pt}
    (u\ta{a}) \Phi^k \tau &\,=\, \vect{b} . \label{eq: BrCP FATF ab}
\end{align}

We distinguish two cases depending on whether $\Phi$ is of type I or II (see \Cref{prop: endos FATF}).
If  
$\Phi = \Phi_{\varphi,\matr{Q},\matr{P}}
$ is of type~I
(that is, $\varphi$, $\matr{Q}$ and $\matr{P}$ are part of the input), then, according to the expression \eqref{eq: powers hom FATF I} for powers of endomorphisms of type I our problem consists in deciding whether there is some $k\in \NN$ such that the following two conditions hold:
\begin{align}[left=\empheqlbrace]
    (u) \varphi^k &\,\sim\, v \label{eq: }\\\hspace{2pt}
    (u\ta{a}) \Phi^k \tau 
    &\,=\, \vect{b} 
\end{align}
We start using the decidability of $\BrCPe(\Fn)$
to check whether there exists some $k \in \NN$ such that $(u) \varphi^k \sim v$. If the answer is \nop, then we stop the process and the answer to $\BrCPe(\GG)$ is also \nop. If the answer is \yep, then in view of \Cref{{lem: logs computable}}, we can assume that $\BrCPe(\Fn)$ outputs $k_0$ and $p$ such that $u\varphi^k\sim v$ if and only if $k\in k_0+p\NN$.
Therefore, our problem translates to deciding whether 
\begin{equation} \label{eq: BrPa tau}
    \exists \lambda \in \NN \st (u\ta{a}) \Phi^{k_0 + \lambda p} \tau \,=\, \vect{b} \,,
\end{equation} 
which according to \Cref{lem: powers conj periodic affine FATF},  can be rewritten as:
\begin{equation}
\exists \lambda \in \NN \st ((u\ta{a}) \Phi^{k_0} \tau) \, \widetilde{T}^{\lambda} \,=\, \vect{b} ,
\end{equation}
where  $\widetilde T$ is the affine transformation $\widetilde T \colon \ZZ^m \to \ZZ^m$, $\vect{x} \mapsto \vect{x} \matr{Q}^{p} + (v\varphi^{k_0})\ab\matr{P}^{(p)}$. 
Again, this is just an instance of the Brinkmann problem for affine transformations, and hence decidable by \Cref{thm: KannanLipton}.

Suppose now that the input endomorphism $\Phi$ is of type II, namely $\Phi = \Phi_{w,\vect{r},\vect{s},\matr{Q},\matr{P}} \in \Endii(\GG)$. We start considering separately the case $k=0$, namely we check whether $u \t[a] \conj v \t[b]$ (which is equivalent to check whether $u \conj v$ in $\Fn$ and $\vect{a} = \vect{b}$ and hence decidable).
If the answer is \yep, then the answer to $\BrCPe(\GG)$ is also \yep. Otherwise, we can restrict conditions \eqref{eq: BrCP FATF fr} and \eqref{eq: BrCP FATF ab} to $k \geq 1$. Note that in this case $(u \t[a]) \Phi^k \pi \leqslant \gen{w}$ (see \Cref{prop: powers hom FATF II}) and hence $v$ must be conjugate to some power of $w$. 
So, the next step is to check whether 
$v \conj w^l$ (or equivalently whether $\cred{v} \conj \cred{w}^l$, where $\cred{u}$ denotes the cyclic reduction of $u$) for some $l \in \ZZ$. We start by computing the ratio $l= 
|\cred{v}| / |\cred{ w}|
$ of the lengths
of the cyclic reductions of $v$ and $w$. Since $|\cred{w}^l|=l|\cred{ w}|$ and two conjugate words must have cyclic reductions of the same length,
if $l$ is not an integer, then the answer to $\BrCPe(\GG)$ is \nop; otherwise, $l$ is the only candidate and it is enough to use $\CP(\Fn)$ to decide whether $v\sim w^l$. If the answer is \nop\ then the answer to $\BrCPe(\GG)$ is again \nop; otherwise we have computed (the only) $l\in \NN$ such that $v\sim w^l$, and hence condition \eqref{eq: BrCP FATF fr} takes the form: $(u)\Phi^k \pi \conj w^l$.

Joining with condition \eqref{eq: BrCP FATF ab} and 
according to \Cref{prop: powers hom FATF II}, it only remains to check whether 
\begin{equation} \label{eq: BrCPII}
    \exists k \in \NN \text{ such that }
    (\vect{u},\vect{a}) 
    \left(
\begin{smallmatrix}
    \vect{s}^{\,\T} & \matr{P}\\
    \vect{r}^{\,\T} & \matr{Q}
\end{smallmatrix}
\right)
\left(
\begin{smallmatrix}
    \vect{w} \vect{s}^{\,\T} & \vect{w} \matr{P}\\
    \vect{r}^{\,\T} & \matr{Q}
\end{smallmatrix}
\right)^{k-1}
= \left(
    l , \vect{b} 
\right)\,,
    \end{equation}
which is just an instance of $\BrP(\ZZ^{m+1})$, and hence decidable by \Cref{thm: KannanLipton}. Then,  the answer to $\BrCPe(\GG)$ is \yep\ if and only if the answer to \eqref{eq: BrCPII} is \yep, and the proof is concluded.
\end{proof}

\section{The conjugacy problem for ascending HNN extensions} \label{sec: CP HNN}
Besides its intrinsic interest, the Brinkman Problem turns out to be crucially related to other important algorithmic problems. Together with the Twisted Conjugacy Problem (\TCP, defined below), it is involved in (a restricted variant of) the conjugacy problem proving machinery developed by \citeauthor{bogopolski_orbit_2010} in the remarkable paper~\cite{bogopolski_orbit_2010}. Concretely, they prove that if a group $G$ fits in the middle of a short exact sequence of the form
$   \trivial \to F \to G \to H \to \trivial$, where  the twisted conjugacy problem~$\TCPa(F)$ and the conjugacy problem $\CP(H)$ are decidable then (with some extra algorithmic assumptions) $\CP(G)$ is decidable if and only if the action subgroup $A_G = \{\gamma_g \colon f \mapsto g^{-1} f g \mid g\in G \} \leqslant \Aut(F)$ is orbit-decidable. 
We will further refer to this result as the BMV Theorem.
We recall that the Brinkmann problem is nothing more that the orbit decidability of cyclic subgroups, an hence one of the main ingredients for the solvability of the conjugacy problem of cyclic extensions.

A classic application of BMV Theorem is the study of the \CP\ of FABF groups (of the form~$\Fn \ltimes \Zm$),
which, using this result, is easily seen to be undecidable in general.
Another consequence (and precursor) of BMV Theorem, previously obtained in \cite{bogopolski_conjugacy_2006},
is the decidability of $\CP(\ZZ \ltimes \Fn)$,
which is reduced to that of $\BrCPa(\Fn)$ and $\TCPa(\Fn)$. 

In order to extend this last result to ascending HNN extensions of free groups, A.~Logan considers in \cite{logan_conjugacy_2023} the variations of Brinkmann's problems stated below for a generic group~$G$ given by a finite presentation \wrt a family of transformations $\mathcal{T} \subseteq \End(G)$.

\begin{named}[Two-sided Brinkmann Problem for $G$ \wrt $\mathcal{T}$]
\[
\tsBrP[T](G)
\,\equiv\,
     \exists (r,s)\in \NN^2 \st x\varphi^r = y\varphi^s\, \?_{\subalign{
    &\varphi\,\in\,\mathcal{T},\\
    & x,y \,\in\,G}}
\]
\end{named}

\begin{named}[Two-sided Brinkmann Conjugacy Problem for $G$ \wrt $\mathcal{T}$]
\[
\tsBrCP[T](G)
\,\equiv\,
    \exists (r,s)\in \NN^2 \st x\varphi^r \sim y\varphi^s\, \?_{\subalign{
    &\varphi\,\in\,\mathcal{T},\\
    & x,y \,\in\,G}}
\]
\end{named}

We write $\tsBrPa(G) = \tsBrP[\Aut(\mathsf{G})]$,  $\tsBrPm(G) =\tsBrP[\Mon(\mathsf{G})]$, and  $\tsBrPe(G) = \tsBrP[\End(\mathsf{G})]$, and similarly for $\tsBrCP$.

As it happens with the standard Brinkmann Problems (and for the same reasons), 
the decidability of 
the  2-sided Brinkmann Problems guarantees the computability of a pair o witnesses $(r,s)$ in case they exist.

We recall that, if $G$ is a group and $\Phi\in \Mon(G)$, then the \emph{ascending HNN extension of $G$ induced by $\Phi$} is the group with relative presentation 

\begin{equation} \label{eq: presentation AHNN}
    G \ast_\Phi
    \,=\,
    \Pres{\hspace{-3pt}
    \begin{array}{ll}
    G,
    x
    \end{array}
    }
    {
    \begin{array}{ll}
    x^{-1}gx=\Phi(g), g\in G
    \end{array} \hspace{-5pt}
    }
\end{equation}
and with normal forms given by $x^i g x^{-j}$, with $i,j\in \NN_0$ and $g\in G.$

The purpose of this subsection is to use the same approach to prove that the conjugacy problem is decidable for ascending HNN extensions of $\Fn \times \Zm$.


\begin{rem}
In \cite{bogopolski_conjugacy_2006}, the authors reduce the decidability of the conjugacy problem in free-by-cyclic groups to the decidability of the $\TCPa$ and $\BrCPa$ in free groups. Following their proof step by step, we can see that this machinery works for any group, in the sense that  $\CP(G\rtimes \ZZ)$ can be reduced to $\TCPa(G)$ and $\BrCPa(G)$, for all groups $G$. Similarly, following Logan's proof of the conjugacy problem for ascending HNN extensions of free groups \cite{logan_conjugacy_2023}, it follows that this machinery can also be used to prove the conjugacy problem for ascending HNN extensions of $G$ for all groups $G$.
\end{rem}

\subsection{Twisted conjugacy} 

As we have already mentioned, one of the main ingredients in BMV-machinery is the Twisted Conjugacy Problem (\TCP). In this section we recall its definition, some previous results, and we study this problem for FATF groups. 

\begin{defn}
    Let $G$ be a group, let $x,y \in G$, and let $\varphi \in \End(G)$. We say that $x$ is \defin{$\varphi$-twisted conjugate} to $y$, denoted by $x \conj_{\varphi} y$, if there exist some $z\in G$ such that $ y=(z\varphi)^{-1}xz$. Then we also say that $z$ is a \defin{$\varphi$-twisted conjugator} of $x$ into $y$.
\end{defn}

It is immediate to check that $\varphi$-twisted conjugacy is an equivalence relation in $G$, and that $\id$-twisted conjugacy corresponds to standard conjugacy.

Let $G$ be a group given by a finite presentation, and let $\mathcal{T} \subseteq \End(G)$ be given in terms of (images of) the generators.
\begin{named}[Twisted conjugacy problem for $G$ \wrt $\mathcal{T}$, $\TCP_{\mathcal{T}}(G)$]
    Decide, given $\varphi \in \mathcal{T}$, and $x,y \in G$, whether there exists some $z\in G$ such that $ y=(z\varphi)^{-1}xz$; \ie
\begin{equation*} \label{eq: BrP}
\TCP_{\mathcal{T}}(G)
\,\equiv\,
\exists z\in G \st y=(z\varphi)^{-1}xz \?_ {\subalign{&\varphi\in\mathcal{T},\\ & x,y\in G}}
\end{equation*}
We write $\TCPa(G) = \TCP_{\!\Aut(G)}(G)$, $\TCPm(G) = \TCP_{\Mon(G)}(G)$, and $\TCPe(G) = \TCP_{\End(G)}(G)$.
\end{named}



\begin{rem}
    Note that the decidability of $\TCP$ guarantees the computability of some twisted conjugator. Namely, given $x,y \in G$, we can always recursively enumerate the elements $z\in G$ and use the positive part of $\WP(G)$ to check whether the twisted conjugacy condition $(z\varphi)^{-1} x z = y$ holds. Since such an element is known to exist, this search procedure will certainly terminate.
\end{rem}
 
The first decidability result of the $\TCP$ of free groups was obtained for automorphisms in~\cite{bogopolski_conjugacy_2006}. More recently, using the computability of the fixed subgroup proved by Mutanguha \cite{mutanguha_constructing_2022}, Logan and Ventura proved (independently) that the twisted conjugacy problem is indeed decidable for all endomorphisms of finitely generated free groups.

\begin{thm}[\cite{ventura_multiple_2021,logan_conjugacy_2023}]
\label{thm: tcp endos free}
$\TCPe(\Fn)$ is decidable. \qed
\end{thm}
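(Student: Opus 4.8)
The plan is to reconstruct the approach of Ventura and of Logan, which rests on J.~Mutanguha's constructive theory of endomorphisms of free groups: for $\varphi\in\End(\Fn)$ the fixed subgroup $\mathrm{Fix}(\psi)=\{u\in\Fn:u\psi=u\}$ of \emph{any} $\psi\in\End(\Fn)$ is effectively computable (one can produce a finite generating set), and the stable image $L_\varphi=\bigcap_{k\geq 0}\Fn\varphi^{k}$ is a computable finitely generated subgroup on which $\varphi$ restricts to a monomorphism $\varphi_{0}:=\varphi|_{L_\varphi}$. The first move is to reduce to the case where $\varphi$ is a monomorphism: if $y=(z\varphi)^{-1}xz$ then $y\varphi^{k}=(z\varphi^{k+1})^{-1}(x\varphi^{k})(z\varphi^{k})$ for all $k$, so $(x\varphi^{k},y\varphi^{k})$ remains $\varphi$-twisted conjugate and lies in $L_\varphi$ for $k$ large; the converse direction requires ``un-applying'' $\varphi$ finitely many times along $\Fn\geq\Fn\varphi\geq\cdots\geq L_\varphi$, using the membership problem for the computable subgroups $\Fn\varphi^{k}$ together with the fact that a coincidence $u\varphi=u'\varphi$ forces $u,u'$ to differ by an element of a finite computable set. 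This leaves $\varphi$ a monomorphism of a free group.

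Assuming $\varphi$ is a monomorphism, I would pass to the ascending HNN extension $G=\Fn\ast_\varphi$ of \eqref{eq: presentation AHNN}. From $t^{-1}gt=g\varphi$ one derives $z^{-1}t=t\,(z\varphi)^{-1}$ for $z\in\Fn$, whence $z^{-1}(tx)z=t\cdot\big[(z\varphi)^{-1}xz\big]$; thus $x\conj_{\varphi}y$ if and only if $tx$ and $ty$ are conjugate in $G$ by an element of $\Fn$. A Collins-type pinch argument for ascending HNN extensions then shows that two elements of the shape $tx,ty$ — which have HNN-length one and $t$-exponent $+1$, so are never conjugate into $\Fn$ — are conjugate in $G$ exactly when they are conjugate by an element of $\Fn$, so the question turns into conjugacy in $G$ for this very restricted family of elements.

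To decide the latter, one analyses cyclically reduced HNN normal forms. Conjugating $tx$ by $t^{k}$ gives $tx\sim t(x\varphi^{k})$ for every $k\geq 0$, and tracking the syllable structure through the conjugating words shows that $tx\sim_{G}ty$ is equivalent to a condition of Brinkmann-conjugacy type — some power $x\varphi^{k}$ is carried to $y$ up to a controlled twist — which is decidable by \Cref{thm: BrPe free}, together with the requirement that the corresponding conjugators lie in a prescribed coset governed by the twisted centralizer $\{w\in\Fn:w\varphi=xwx^{-1}\}=\mathrm{Fix}(\varphi\gamma_{x})$, computable by Mutanguha's theorem. Running the finitely many relevant Brinkmann checks against these fixed-subgroup computations decides conjugacy of $tx$ and $ty$, hence $x\conj_{\varphi}y$, and therefore $\TCPe(\Fn)$.

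The main obstacle is the conjugacy analysis of the third paragraph: carrying out the Collins-style pinch combinatorics for ascending HNN extensions precisely enough to isolate the Brinkmann condition and the exact coset of admissible conjugators is delicate, and it is here that the prior decidability of $\BrCPm(\Fn)$ (\Cref{thm: BrPe free}) and the computability of fixed subgroups of composites $\varphi\gamma_{g}$ are both essential. A secondary difficulty lies in the first paragraph: since $x,y$ need not belong to $L_\varphi$, one must verify rigorously that the behaviour of $\varphi$-twisted conjugacy under the finitely many lifts back through the tower $\Fn\geq\Fn\varphi\geq\cdots$ is controlled by a bounded, computable amount of data.
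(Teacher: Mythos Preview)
The paper does not prove this statement; it is quoted from the literature (Ventura and Logan, independently) with no argument supplied. So there is no in-paper proof to compare against, and I assess your reconstruction on its own merits.

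There is a genuine gap. Your core move is to reduce $x\sim_\varphi y$ to conjugacy of $tx$ and $ty$ in the ascending HNN extension $G=\Fn\ast_\varphi$ and then decide the latter. But the Collins-type claim in your second paragraph --- that $tx$ and $ty$ are conjugate in $G$ exactly when they are conjugate by an element of $\Fn$ --- fails for strictly ascending extensions: since $t^{-k}(tx)t^{k}=t(x\varphi^{k})$, one gets only the weaker equivalence
\[
tx\sim_G ty \ \Longleftrightarrow\ \exists\,i,j\ge 0,\ h\in\Fn:\ (h\varphi)^{-1}(x\varphi^{i})h=y\varphi^{j},
\]
which can hold without $x\sim_\varphi y$. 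Worse, the analysis you sketch in paragraph~3 is precisely Logan's route to $\CP(G)$, and that route \emph{uses} $\TCPe(\Fn)$ as an input; invoking it here to establish $\TCPe(\Fn)$ is circular. A smaller error: in paragraph~1 the claim that ``$u\varphi=u'\varphi$ forces $u,u'$ to differ by an element of a finite computable set'' is false --- $\ker\varphi$ is either trivial or infinite in a free group --- so your proposed descent from $L_\varphi$ back to $\Fn$ does not go through as written.

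The argument in the cited references is more direct and needs neither the reduction to monomorphisms nor any HNN detour. One encodes the instance $(x,y,\varphi)$ as a single endomorphism of a free group of one higher rank: define $\Psi\in\End(\Fn\ast\langle s\rangle)$ by $g\Psi=x^{-1}(g\varphi)x$ for $g\in\Fn$ and $s\Psi=x^{-1}ys$; then $(zs)\Psi=x^{-1}(z\varphi)y\,s$, so for $z\in\Fn$ one has $zs\in\Fix(\Psi)$ if and only if $(z\varphi)^{-1}xz=y$. Mutanguha's theorem produces a finite basis for $\Fix(\Psi)$, after which deciding whether $\Fix(\Psi)$ meets the coset $\Fn\cdot s$ is a routine Stallings-graph computation. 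This handles every $\varphi\in\End(\Fn)$ uniformly.
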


Note that although $\TCPe(\Zm)$ reduces to solving a linear system of Diophantine equations and hence is clearly decidable as well (\eg reducing to the Smith Formal Form,~see \cite{norman_finitely_2012}), the decidability of $\TCPe(\Fn \times \Zm)$ is not an immediate consequence of the decidability of $\TCPe$ on its factors,
as it follows from the description of $\End(\Fn \times \Zm)$ in \Cref{prop: endos FATF}.
In order to prove it, we need to also take into account the interaction between the free and free-abelian parts arising from this description (encoded, for example in the matrix $\matr{P}$, in endomorphisms of type I).

For a group $G$ and $x\in G$, we denote by $\gamma_x$ the inner automorphism of $G$ defined by $g\mapsto x^{-1}gx$.  A convenient description of the set of twisted-conjugators follows.

\begin{lem}\label{lem: twisted conjugators}
Let $x,y\in G$, $\varphi\in \End(G)$ and $w_0\in G$ such that $(w_0^{-1}\varphi) x w_0=y$. Then, for all $w\in G$, 
\begin{equation*}
(w^{-1}\varphi) x w=y
\,\Leftrightarrow\,
w\in \Fix(\varphi\gamma_x)w_0 \, .
\end{equation*}
\end{lem}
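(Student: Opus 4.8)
The plan is to verify the stated equivalence by a direct computation, exploiting the fact that twisted conjugacy by $\varphi$ can be rephrased as an ordinary fixed-point condition for the composed endomorphism $\varphi\gamma_x$, where $\gamma_x$ is the inner automorphism $g \mapsto x^{-1}gx$. First I would rewrite the twisted conjugacy equation $(w^{-1}\varphi)xw = y$ using $w_0$ as a known solution. Since $(w_0^{-1}\varphi)xw_0 = y$, substituting and solving for the relationship between $w$ and $w_0$ should reduce the problem to a condition involving only $\varphi$, $\gamma_x$, and the quotient $ww_0^{-1}$.

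The key computation runs as follows. The equation $(w^{-1}\varphi)xw = y$ combined with $y = (w_0^{-1}\varphi)xw_0$ gives $(w^{-1}\varphi)xw = (w_0^{-1}\varphi)xw_0$, hence $(w_0\varphi)(w^{-1}\varphi)\,x\,w w_0^{-1} = x$, i.e. $((ww_0^{-1})^{-1}\varphi)\,x\,(ww_0^{-1}) = x$. Writing $v = ww_0^{-1}$, this says $(v^{-1}\varphi)\,x\,v\,x^{-1} = \trivial$, that is $(v\varphi)^{-1}\, (v\gamma_x^{-1})\cdots$ — more cleanly, multiplying out: $v^{-1}\varphi = x v^{-1} x^{-1} = (v\gamma_x)^{-1}$, so $v\varphi = v\gamma_x$, which is exactly $v \in \Fix(\varphi\gamma_x)$ (recalling the right-action convention, so $\varphi\gamma_x$ means first apply $\varphi$, then $\gamma_x$). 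Unwinding $v = ww_0^{-1}$ yields $w \in \Fix(\varphi\gamma_x)\,w_0$, which is the claimed statement; and every step is reversible, giving the equivalence in both directions.

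I would present this as a short chain of equivalent equations, being careful about the right-action convention used in the paper (so that $(g)\varphi\gamma_x = x^{-1}((g)\varphi)x$) and about inverses, since these are the only places an error could creep in. The main (and essentially only) obstacle is bookkeeping: making sure the composition order and the placement of inverses are handled consistently so that the condition lands precisely as membership in the coset $\Fix(\varphi\gamma_x)w_0$ rather than, say, $w_0\Fix(\varphi\gamma_x)$ or a fixed-point set of a slightly different endomorphism. There is no conceptual difficulty here; the lemma is a formal restatement of the twisted conjugacy equation, and the proof is a two-line manipulation once the conventions are pinned down.
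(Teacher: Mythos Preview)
Your approach is correct and essentially identical to the paper's: both substitute $y=(w_0^{-1}\varphi)xw_0$, set $v=ww_0^{-1}$, and reduce to the fixed-point condition $v\in\Fix(\varphi\gamma_x)$. One caution: your ``more cleanly'' line contains exactly the bookkeeping slip you anticipated---with $\gamma_x\colon g\mapsto x^{-1}gx$ one has $(v\gamma_x)^{-1}=x^{-1}v^{-1}x\neq xv^{-1}x^{-1}$, and the equation $v\varphi=v\gamma_x$ is \emph{not} the same as $v\in\Fix(\varphi\gamma_x)$; the correct chain (as the paper writes it) is $v\varphi=xvx^{-1}\Leftrightarrow x^{-1}(v\varphi)x=v\Leftrightarrow (v)\varphi\gamma_x=v$.
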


\begin{proof}
Since $(w_0^{-1}\varphi) x w_0=y$, we have that
\begin{align*}
    (w^{-1}\varphi) x w=y
&\,\Leftrightarrow\,  (w^{-1}\varphi) x w=(w_0^{-1}\varphi) x w_0\\
& \,\Leftrightarrow\, x^{-1}((ww_0^{-1})\varphi) x=ww_0^{-1}\\
& \,\Leftrightarrow\, ww_0^{-1}\in\Fix(\varphi\gamma_{x})\\
& \,\Leftrightarrow\, w\in  \Fix(\varphi\gamma_{x})w_0 ,
\end{align*}
as we wanted to prove.
\end{proof}

\begin{thm}
$\TCPe(\Fn \times \Zm)$ is decidable.
\end{thm}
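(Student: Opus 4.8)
The plan is to split into the two types of endomorphisms from \Cref{prop: endos FATF}, exactly as in the proof of \Cref{thm: BrCPm(FATF)}. Fix $u\t[a], v\t[b] \in \GG = \Fn \times \Zm$ and $\Phi \in \End(\GG)$; we must decide whether there is $z\t[c]\in\GG$ with $((z\t[c])\Phi)^{-1}(u\t[a])(z\t[c]) = v\t[b]$. Writing $z\t[c]$ for the unknown conjugator and using that the free and free-abelian parts of a product add independently, the twisted-conjugacy equation decomposes into a \emph{free component} (an equation in $\Fn$ involving $z\in\Fn$ and $(z)\Phi\pi$) and a \emph{free-abelian component} (a linear equation in $\ZZ^m$ involving $\vect{c}$, $\vect{z} = z\ab$, and $\vect{a},\vect{b}$, plus whatever $\matr{P}$-term $\Phi$ contributes). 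The free component does not see $\vect{c}$ at all, so the strategy is: first solve the free component using $\TCPe(\Fn)$ (\Cref{thm: tcp endos free}), obtain a description of \emph{all} its solutions $z$, then for each such $z$ check whether the residual linear system in $\vect{c}$ over $\ZZ$ is solvable (which is decidable, being a system of linear Diophantine equations).

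For endomorphisms of type I, $\Phi = \Phi_{\varphi,\matr{Q},\matr{P}}$, one computes $((z\t[c])\Phi)^{-1}(u\t[a])(z\t[c])$ using $(z\t[c])\Phi = z\varphi\,\t^{\vect{c}\matr{Q}+\vect{z}\matr{P}}$ and the normal-form multiplication $(u\t^{\vect{a}})(v\t^{\vect{b}}) = uv\,\t^{\vect{a}+\vect{b}}$. The free part of the result is $(z\varphi)^{-1} u z$, so the free component is precisely $(z\varphi)^{-1} u z = v$, an instance of $\TCP_{\varphi}(\Fn)$; by \Cref{thm: tcp endos free} and \Cref{lem: twisted conjugators} its solution set, if nonempty, is a coset $\Fix(\varphi\gamma_u)\,w_0$ with $\Fix(\varphi\gamma_u)$ computable (a finitely generated subgroup of $\Fn$, by Mutanguha's theorem, which underlies \Cref{thm: tcp endos free}). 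The free-abelian component reads $-(\vect{c}\matr{Q}+\vect{z}\matr{P}) + \vect{a} + \vect{c} = \vect{b}$, i.e. $\vect{c}(\matr{I}-\matr{Q}) = \vect{b} - \vect{a} + \vect{z}\matr{P}$; as $z$ ranges over the coset $\Fix(\varphi\gamma_u)w_0$, the vector $\vect{z} = z\ab$ ranges over a computable coset of a finitely generated subgroup of $\ZZ^n$ (the image under abelianization of $\Fix(\varphi\gamma_u)$ translated by $w_0\ab$), so $\vect{z}\matr{P}$ ranges over a computable coset of a finitely generated subgroup $L \leqslant \ZZ^m$. Hence the question becomes whether the affine subset $\{\vect{b}-\vect{a}\} + (w_0\ab)\matr{P} + L$ meets the subgroup $\ZZ^m(\matr{I}-\matr{Q})$, a linear-algebra-over-$\ZZ$ question, decidable e.g. via Smith normal form.

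For endomorphisms of type II, $\Phi = \Phi_{w,\vect{r},\vect{s},\matr{Q},\matr{P}}$, the key simplification is that $\im(\Phi) \leqslant \gen{w}\times\Zm$ is abelian (the Remark after \Cref{prop: endos FATF}); in particular $(z\t[c])\Phi$ lies in the abelian group $\gen w\times\Zm$ for every $z\t[c]$, and its free part $w^{\vect z\vect s^{\,\T}+\vect c\vect r^{\,\T}}$ commutes with everything in that subgroup. The free component of the twisted-conjugacy equation is then $(z\varphi{-}\text{part})^{-1} u z$ at the level of $\Fn$, which after cyclic reduction forces $u$ to be conjugate in $\Fn$ to an element of $\gen w$ (otherwise output \nop), and in that case $z$ is constrained to lie in the centralizer-coset making $(w^{-e})^{-1} u z = v$ solvable for the relevant exponent $e$ — again a condition on $z$ and on the integer $\vect z\vect s^{\,\T}+\vect c\vect r^{\,\T}$ that reduces to $\WP(\Fn)$ plus a linear condition. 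Writing everything in the basis $W = (w,t_1,\dots,t_m)$ via \eqref{eq: hom FATF II concise} turns the whole equation into a system of linear Diophantine equations in $\vect z, \vect c$ together with a $\Fn$-conjugacy side-condition on $z$ that pins down $\vect z = z\ab$ up to a computable coset; solvability is then decidable.

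The main obstacle I expect is \emph{not} the case analysis but making precise, in the type I case, that the image of the twisted-conjugator coset $\Fix(\varphi\gamma_u)w_0 \leqslant \Fn$ under $z\mapsto \vect z\matr P \in \ZZ^m$ is effectively computable as a coset of a finitely generated subgroup — this needs the computability of $\Fix(\varphi\gamma_u)$ (available through \Cref{thm: tcp endos free}/Mutanguha) together with the routine but essential observation that abelianization sends a computable f.g. subgroup of $\Fn$ to a computable f.g. subgroup of $\ZZ^n$, after which the residual problem is pure linear algebra over $\ZZ$. Care is also needed, in both types, to treat the exponent $k=0$/base cases and the degenerate values $n\le 1$ or $m=0$ consistently with \Cref{prop: endos FATF}.
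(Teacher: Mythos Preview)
Your type~I analysis is correct and essentially identical to the paper's: solve the free component $(z\varphi)^{-1}uz=v$ via $\TCPe(\Fn)$, parametrize its solution set as the coset $\Fix(\varphi\gamma_u)\,w_0$ using \Cref{lem: twisted conjugators}, compute generators of $\Fix(\varphi\gamma_u)$ via Mutanguha, abelianize, and reduce the remaining free-abelian condition to a linear Diophantine system.

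Your type~II analysis, however, contains a genuine error. With $\Phi=\Phi_{w,\vect{r},\vect{s},\matr{Q},\matr{P}}$ and unknown conjugator $z\,\t^{\vect c}$, the free component of the twisted-conjugacy equation is
\[
w^{-e}\,u\,z \,=\, v,\qquad e \,=\, \vect{c}\,\vect{r}^{\,\T}+\vect{z}\,\vect{s}^{\,\T},
\]
which for \emph{every} integer $e$ has the unique solution $z=u^{-1}w^{e}v$. There is no constraint whatsoever that $u$ be conjugate to an element of $\gen{w}$, and your proposed step ``otherwise output \nop'' would return wrong answers (e.g.\ take any $u\notin\bigcup_{g}\gen{w}^g$, set $e=0$, $z=u^{-1}v$; the free equation holds). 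Likewise there is no ``$\Fn$-conjugacy side-condition'' on $z$ and no centralizer-coset to compute.

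The paper's treatment of type~II is much simpler than you anticipate. Since the free equation determines $z$ uniquely from $e$, one may simply abelianize it to obtain $\vect{v}=-e\,\vect{w}+\vect{u}+\vect{z}$; conversely any $(\vect{z},\vect{c})$ satisfying this (with $e=\vect{c}\,\vect{r}^{\,\T}+\vect{z}\,\vect{s}^{\,\T}$) lifts to the genuine solution $z=u^{-1}w^{e}v$, which has exactly the abelianization $\vect{z}$. Together with the free-abelian component $\vect{b}=\vect{c}(\matrid[m]-\matr{Q})-\vect{z}\matr{P}+\vect{a}$, the whole problem is therefore \emph{equivalent} to a linear Diophantine system in the unknowns $(\vect{z},\vect{c})$, with no free-group condition remaining at all. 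Your final sentence lands at this same destination, but the route you describe to get there is incorrect.
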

\begin{proof}
Recall that 
the problem is to decide, given $u\ta{a},v\ta{b}\in \Fn \times \Zm$, and $\Phi \in \End(\Fn \times \Zm)$ whether
\begin{equation} \label{eq: TCPe FATF}
\exists w \t[c] \in \Fn \times \Zm
\text{ such that }
((w \t[c])^{-1}) \Phi \, u\t[a] \, w \t[c]
\,=\,
v\t[b] \,.
\end{equation}

We distinguish two cases depending on the type of the input endomorphism $\Phi$ (see \Cref{prop: endos FATF}).
If $ \Phi =\Phi_{\varphi, \matr{Q},\matr{P}}$ is an endomorphism of type I, then $(w\ta{c})^{-1}\Phi=(w^{-1}\varphi)\ta{-c\matr{Q}-\vect{w}\matr{P}}$, and hence 
$(w\ta{c})^{-1}\Phi u\ta{a}w\ta{c}=(w^{-1}\varphi) u w \ta{c(\matrid[m]-\matr{Q})+a-\vect{w}\matr{P}}$.
Separating the free and free-abelian parts, the problem \eqref{eq: TCPe FATF} is reduced to deciding whether
\[
\exists w\in \Fn\,,\ \exists \vect{c}\in \Zm \text{ such that } 
\bigg{\{} \!\!
\begin{array}{l}
v=(w^{-1}\varphi) u w\\
\vect{w}\matr{P}+\vect{b}-\vect{a}=\vect{c}(\matrid[m]-\matr{Q}).
\end{array}
\]
We use \Cref{thm: tcp endos free} to decide whether there is some $w\in \Fn$ such that ${(w^{-1}\varphi)uw=v}$, and, if so, compute a twisted conjugator, say $w_0$. If there is none, then the answer to \eqref{eq: TCPe FATF} is \nop. Otherwise, according the previous lemma, the goal is to decide whether 
\begin{equation} \label{eq: aux TCP 2}
\exists w\in \Fix(\varphi\gamma_u)w_0 \text{\, such that \,} 
\vect{w}\matr{P} \in \vect{a}-\vect{b} + \im(\matrid[m]-\matr{Q}).
\end{equation}
In order to do so, we start using Mutanguha's algorithm to compute a generating set $S$ for $\Fix(\varphi\gamma_u)$, which once abelianized constitutes a (free-abelian) basis for the subgroup $\gen{S\ab} = M = (\Fix(\varphi\gamma_u))\ab \leqslant \ZZ^m$. Now it is clear that deciding \eqref{eq: aux TCP 2} is equivalent to deciding whether
\[
\exists \vect{w} \in \ZZ^m \text{ such that } 
\bigg{\{} \!\!
\begin{array}{l}
\vect{w}\in \vect{w_0} + M\\
\vect{w}\matr{P} \in \vect{a}-\vect{b} + \im(\matrid[m]-\matr{Q}),
\end{array}
\]
which is clearly decidable since it corresponds to solving a linear system of Diophantine equations.

On the other side, if $\Phi = \Phi_{z,\vect{l},\vect{h},\matr{Q},\matr{P}} \in \Endii(\Fn \times \Zm)$ then $(w\ta{c})^{-1}\Phi=z^{-\vect{c}\vect{l}^T-\vect{w}\vect{h}^T}\ta{-\vect{c}\matr{Q}-\vect{w}\matr{P}}$, and hence $(w\ta{c})^{-1}\Phi \, u\ta{a} \, w\ta{c} = z^{-\vect{c}\vect{l}^T-\vect{w}\vect{h}^T}uw \,\ta{\vect{c}(\matrid[m]-\matr{Q})-\vect{w}\matr{P}+a}$.
Separating the free and free-abelian parts, in this case the problem \eqref{eq: TCPe FATF} is reduced to deciding whether 
\begin{equation}
\exists w\in \Fn \,, \exists c\in \Zm
\text{ such that }
\bigg{\{} \!\!
\begin{array}{l}
v=z^{-\vect{c}\vect{l}^T-\vect{w}\vect{h}^T}uw\\
\vect{b}=\vect{c}(\matrid[m]-\matr{Q})-{w}\ab\matr{P}+\vect{a},
\end{array}
\end{equation}
which (after abelianizing the free part) is equivalent to deciding whether
\begin{equation}
\exists \vect{w}\in \ZZ^n \,, \exists \vect{c} \in \Zm
\text{ such that }
\bigg{\{}\!\!
\begin{array}{l}
\vect{v}=(-\vect{c}\vect{l}^T-\vect{w}\vect{h}^T)\vect{z}+ \vect{u}+\vect{w}\\
\vect{b}=\vect{c}(\matrid[m]-\matr{Q})-\vect{w}\matr{P}+\vect{a},
\end{array}
\end{equation}
a condition which corresponds, once again, to checking for solutions of a linear system of Diophantine equations, and is therefore computable. So, both (type I and type II) cases have been shown to be computable, and the proof is concluded.
\end{proof}

\begin{rem}
Notice that, for free and virtually free groups, the solution of $\TCPe$ follows from computability of the fixed subgroup of a given endomorphism, while in the case of $\FATF$ groups, there exist endomorphisms for which the fixed subgroup is not finitely generated (see e.g.~\cite{delgado_algorithmic_2013}), but $\TCPe$ is still decidable.
\end{rem}

\subsection{Two-sided Brinkmann  \&\ the conjugacy problem
}


In this section we prove that the two-sided Brinkmann problem is solvable for monomorphisms of FATF groups, and we use Logan's 2-sided variation of BMV-machinery to combine it with the decidability of $\TCP(\Fn \times \Zm)$ and  deduce the solvability of the \CP\ for ascending HNN extensions of $\Fn \times \Zm$ groups.  Although Logan proves the decidibility of $\tsBrCPe(\Fn)$ in \cite[Proposition 5.3]{logan_conjugacy_2023}, we are unaware of the existence of an algorithm to decide $\tsBrPe(\ZZ^m)$. Below, we restrict our analysis to the injective case.

Taking advantage of the description of monomorphisms given by \Cref{cor: injective bijective}, we prove a preliminary lemma.

\begin{lem}\label{lem: pushing powers conjugation}
Let $\Phi = \Phi_{\varphi,\matr{Q},\matr{P}} \in \Mon(\Fn \times \Zm)$, let $\fba{u}{a},\fba{v}{b} \in \Fn \times \Zm$, and let $r,s \in\NN$. Then,
\begin{enumerate}[ind]
    \item \label{item: l1}
    if $(\fba{u}{a})\Phi^r \conj (\fba{v}{b})\Phi^s$ then $(\fba{u}{a})\Phi^{r+1} \conj (\fba{v}{b})\Phi^{s+1}$.
    \item \label{item: l2}
    if $ u \varphi^r \conj_{\Fn} v\varphi^s$ and $(\fba{u}{a})\Phi^{r+1} \conj (\fba{v}{b})\Phi^{s+1}$, then $(\fba{u}{a})\Phi^{r} \conj (\fba{v}{b})\Phi^{s}$
\end{enumerate}
\end{lem}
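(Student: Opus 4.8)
The plan is to treat the two parts separately; the first is essentially a triviality and the second carries the content.

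For part~\ref{item: l1} I would simply observe that $\Phi$ is an endomorphism of $\Fn\times\Zm$ and that any group homomorphism sends conjugate elements to conjugate elements; applying $\Phi$ to both sides of $(\fba{u}{a})\Phi^{r}\conj(\fba{v}{b})\Phi^{s}$ yields $(\fba{u}{a})\Phi^{r+1}\conj(\fba{v}{b})\Phi^{s+1}$ immediately, with no recourse to injectivity.

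For part~\ref{item: l2} the two ingredients are: first, the conjugacy criterion in FATF groups, namely that $p\t[\alpha]\conj q\t[\beta]$ in $\Fn\times\Zm$ exactly when $p\conj_{\Fn}q$ and $\alpha=\beta$ (the free-abelian part being central); and second, \Cref{cor: injective bijective}, by which a monomorphism $\Phi$ is necessarily of type~I with $\varphi$ injective and $\det(\matr{Q})\neq0$. Set $x=(\fba{u}{a})\Phi^{r}$ and $y=(\fba{v}{b})\Phi^{s}$. By \Cref{prop: powers FATF I} we have $\pi(x)=u\varphi^{r}$ and $\pi(y)=v\varphi^{s}$, so the first hypothesis of part~\ref{item: l2} says precisely $\pi(x)\conj_{\Fn}\pi(y)$; in view of the conjugacy criterion it then suffices to prove $\tau(x)=\tau(y)$. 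I would obtain this from $x\Phi\conj y\Phi$ by projecting to the free-abelian part: by \eqref{eq: hom FATF I} we have $\tau(x\Phi)=\tau(x)\matr{Q}+\pi(x)\ab\matr{P}$ and likewise for $y$, so $\tau(x\Phi)=\tau(y\Phi)$ rewrites as $\tau(x)\matr{Q}+\pi(x)\ab\matr{P}=\tau(y)\matr{Q}+\pi(y)\ab\matr{P}$. Since conjugate elements of a free group have equal abelianizations, $\pi(x)\ab=\pi(y)\ab$, so the $\matr{P}$-terms cancel and $(\tau(x)-\tau(y))\matr{Q}=\vect{0}$; finally $\det(\matr{Q})\neq0$ forces $\tau(x)=\tau(y)$, which together with $\pi(x)\conj_{\Fn}\pi(y)$ yields $x\conj y$, as required.

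The computations are routine; the only genuine point is locating where injectivity enters. It is $\det(\matr{Q})\neq0$ that makes the descent from $\Phi^{r+1},\Phi^{s+1}$ back to $\Phi^{r},\Phi^{s}$ possible, and it is truly needed — for arbitrary endomorphisms of $\Fn\times\Zm$ the implication in part~\ref{item: l2} fails. Likewise, the auxiliary hypothesis $u\varphi^{r}\conj_{\Fn}v\varphi^{s}$ is exactly what annihilates the $\matr{P}$-contribution and isolates the $\matr{Q}$-term, so it cannot be dropped.
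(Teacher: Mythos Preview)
Your proof is correct and follows essentially the same route as the paper: part~\ref{item: l1} by applying $\Phi$, and part~\ref{item: l2} by splitting the conjugacy test into free and abelian parts, using $(u\varphi^r)\ab=(v\varphi^s)\ab$ to cancel the $\matr{P}$-contribution in the $\tau$-computation, and then invoking invertibility of $\matr{Q}$ (from \Cref{cor: injective bijective}) to deduce equality of the abelian parts. Your notation $x=(\fba{u}{a})\Phi^{r}$, $y=(\fba{v}{b})\Phi^{s}$ streamlines the computation slightly, but the argument is the same.
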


\begin{proof}
The claim in $\ref{item: l1}$ is obvious (it is enough to compose with $\Phi$ on both sides). To prove~$\ref{item: l2}$, notice that two elements are conjugate in $\Fn \times \Zm$ if and only if their free parts are conjugate in $\Fn$ and their abelian parts coincide. By \eqref{eq: hom FATF I} and \eqref{eq: powers hom FATF I}:

\begin{align}
 &(\fba{u}{a}) \Phi^{r}
    \,=\,
    (u)\varphi^{r}\vect{t}^{\vect{a}\matr{Q}^{r}+ \mathbf{u}\matr{P}^{(r)}}
    \label{eq: power uta}\\
    &(\fba{v}{b}) \Phi^{s} 
    \,=\,
    (v)\varphi^{s}\vect{t}^{\vect{b}\matr{Q}^{s}+ \mathbf{v}\matr{P}^{(s)}}
    \label{eq: power vtb}
\end{align}

By hypothesis, we have that $ u \varphi^r \conj_{\Fn} v\varphi^s$, which implies that $(u\varphi^r)\ab=(v\varphi^s)\ab$, 
so we only have to prove that the abelian parts in Equations (\ref{eq: power uta}) and (\ref{eq: power vtb}) coincide. By \eqref{eq: hom FATF I} and \eqref{eq: powers hom FATF I}, we have that 
\begin{align*}
  (\fba{v}{b}) \Phi^{s+1}\tau
  \,&=  (\fba{v}{b}) \Phi^{s}\Phi\tau\\
  \,&=
  \,({(\fba{v}{b}) \Phi^{s}\tau)\matr{Q}+((\fba{v}{b}) \Phi^{s}\pi)\ab\matr{P}}\\
  \,&=\,{({\vect{b}\matr{Q}^{s}+ \mathbf{v}\matr{P}^{(s)}})\matr{Q}+(v\varphi^s)\ab\matr{P}}
  , 
\end{align*}
and similarly,
\begin{align*}
(\fba{u}{a}) \Phi^{r+1}\tau
&\,=\,{({\vect{a}\matr{Q}^{r}+ \mathbf{u}\matr{P}^{(r)}})\matr{Q}
+(u\varphi^r)\ab\matr{P}}\\
&\,=\,\,{({\vect{a}\matr{Q}^{r}+ \mathbf{u}\matr{P}^{(r)}})\matr{Q}
+(v\varphi^s)\ab\matr{P}}.
\end{align*}

Since $(\fba{u}{a}) \Phi^{r+1}\tau=(\fba{v}{b}) \Phi^{s+1}\tau$, it follows that 
$({\vect{a}\matr{Q}^{r}+ \mathbf{u}\matr{P}^{(r)}})\matr{Q}=({\vect{b}\matr{Q}^{s}+ \mathbf{v}\matr{P}^{(s)}})\matr{Q}$
and so 
${\vect{a}\matr{Q}^{r}+ \mathbf{u}\matr{P}^{(r)}}={\vect{b}\matr{Q}^{s}+ \mathbf{v}\matr{P}^{(s)}}$, by injectivity of $\matr{Q}$.
 \end{proof}

Now we can prove the two-sided version of Brinkmann's conjugacy problem for monomorphisms of $\FATF$ groups.
\begin{thm}
For any $n,m \in \NN$, $\tsBrCPm(\Fn \times \Zm)$ is decidable.

That is, (for any $m,n \in \NN$) there is an algorithm that, given $\fba{u}{a},\fba{v}{b} \in \Fn \times \Zm$ and $\Phi \in \Mon(\Fn\times\Zm)$, decides whether there exist some $r,s \in \NN$ such that  $(\fba{u}{a})\Phi^r \conj (\fba{v}{b})\Phi^s$ and, in case they exist, compute one such pair $(r,s)\in \NN^2$.

\end{thm}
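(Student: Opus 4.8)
The plan is to reduce the two-sided problem to a one-sided Brinkmann conjugacy problem by using \Cref{lem: pushing powers conjugation} to push exponents toward a common base point, and then to handle the free and free-abelian parts separately. First I would recall from \Cref{cor: injective bijective} that a monomorphism $\Phi$ must be of type~I, say $\Phi = \Phi_{\varphi,\matr{Q},\matr{P}}$ with $\varphi \in \Mon(\Fn)$ and $\det(\matr{Q}) \neq 0$; in particular $\matr{Q}$ is injective, which is exactly what makes \Cref{lem: pushing powers conjugation}\ref{item: l2} usable. The key structural observation is that, by \Cref{lem: pushing powers conjugation}\ref{item: l1}, the relation ``$\exists r,s$ with $(\fba{u}{a})\Phi^r \conj (\fba{v}{b})\Phi^s$'' is stable under simultaneously increasing $r$ and $s$; so it suffices to decide, for each fixed small ``offset'' $d = r - s$, whether there is some exponent realizing that offset. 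This is the shape of reasoning Logan uses for free groups, and the point is to show that only finitely many offsets need to be checked.

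The core step is to reduce to the one-sided $\BrCP$ already established in \Cref{thm: BrCPm(FATF)}. Splitting into free and free-abelian parts (two elements are conjugate in $\Fn \times \Zm$ iff their free parts are conjugate in $\Fn$ and their abelian parts coincide), the free-part condition becomes $\exists r,s \colon u\varphi^r \conj_{\Fn} v\varphi^s$, which is precisely $\tsBrCPm(\Fn)$ — decidable by Logan's work (the two-sided analogue of \Cref{thm: BrPe free}), and moreover it outputs (or lets us enumerate) the set of witness pairs $(r,s)$, which by the standard periodicity argument (analogous to \Cref{lem: philog form}) is a finite union of cosets of the form $(r_0,s_0) + \NN(p,q)$ for the relevant periods $p,q$. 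For each such family, \Cref{lem: pushing powers conjugation}\ref{item: l2} tells us that once the free parts already match, the full conjugacy $(\fba{u}{a})\Phi^r \conj (\fba{v}{b})\Phi^s$ is equivalent to the corresponding condition one step further along, hence ultimately to a condition purely on the abelian parts. Concretely, fixing a witness pair $(r_0,s_0)$ for the free part and writing $r = r_0 + \lambda p$, $s = s_0 + \lambda q$, the abelian-part equation $\vect{a}\matr{Q}^r + \vect{u}\matr{P}^{(r)} = \vect{b}\matr{Q}^s + \vect{v}\matr{P}^{(s)}$ becomes — using $\matr{P}^{(k)} = \sum_{i=1}^k (\varphi\ab)^{i-1}\matr{P}\matr{Q}^{k-i}$ and the fact that $(u\varphi^r)\ab = (v\varphi^s)\ab$ is eventually constant along the arithmetic progression (so that the ``$\matr{P}$-tail'' evolves affinely, cf.\ \Cref{lem: powers conj periodic affine FATF}) — an orbit question for a single affine transformation of $\Zm$ applied to the parameter $\lambda$. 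That is an instance of $\BrP_{\!\operatorname{Aff}}(\Zm)$, decidable by \Cref{thm: KannanLipton}; running this for each of the finitely many free-part witness families and returning \yep\ if any succeeds settles decidability, and the explicit $(r,s)$ can then be recovered by the search described in \Cref{rem: yes outputs}.

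The main obstacle I anticipate is the bookkeeping needed to guarantee finiteness: one must argue that there are only finitely many ``essentially different'' witness pairs $(r,s)$ for the free part to check — equivalently, that the two-sided free-group Brinkmann conjugacy problem returns its solution set as a finite, effectively describable object — and then that along each arithmetic progression of such pairs the abelian data $(u\varphi^r)\ab, (v\varphi^s)\ab$ and the matrix powers $\matr{Q}^r, \matr{P}^{(r)}$ stabilize into a genuinely affine recurrence in the single parameter $\lambda$, so that \Cref{thm: KannanLipton} really applies. A secondary subtlety is the interplay between the two periods $p$ (for $u\varphi^\bullet$) and $q$ (for $v\varphi^\bullet$): one should pass to a common period $\ell = \operatorname{lcm}$-type quantity and re-index, and check that \Cref{lem: pushing powers conjugation}\ref{item: l2} can be iterated enough times to move any candidate pair into the range where the free parts are already conjugate (this uses injectivity of $\Phi$, hence of $\matr{Q}$, in an essential way — it is what lets us \emph{cancel} a $\Phi$ rather than only append one). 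Once these points are in place, the assembly into an algorithm is routine.
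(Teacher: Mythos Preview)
Your proposal has all the right ingredients and is very close in spirit to the paper's argument, but the central reduction step contains a real gap. You propose to parametrize the free-part solution set as finitely many rays $(r_0,s_0)+\NN(p,q)$ and then check the abelian condition along each via Kannan--Lipton. Two things go wrong. First, the set $\{(r,s):u\varphi^r\sim v\varphi^s\}$ is not a finite union of one-parameter rays: it is closed under the diagonal shift $(r,s)\mapsto(r+1,s+1)$ and under the individual periods, so once nonempty it is essentially a two-dimensional cone. Second --- and this is the real obstruction --- along a ray $(r_0+\lambda p,\,s_0+\lambda q)$ with $p\neq q$, the abelian condition reads $((u\t[a])\Phi^{r_0}\tau)\,\widetilde T_1^{\,\lambda}=((v\t[b])\Phi^{s_0}\tau)\,\widetilde T_2^{\,\lambda}$ for two \emph{different} affine maps (linear parts $\matr{Q}^{p}$ versus $\matr{Q}^{q}$). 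That is not an instance of \Cref{thm: KannanLipton}; it is an ``orbit meets diagonal'' question of Skolem type. And if you force $p=q$ via an $\operatorname{lcm}$ as you suggest, the condition becomes constant along each diagonal ray (by \Cref{lem: pushing powers conjugation}\ref{item: l2} and injectivity of $\matr{Q}$), so you are left with infinitely many offsets $r-s$ still to check.

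The paper resolves this by using \Cref{lem: pushing powers conjugation} more sharply. It computes \emph{one} free-part witness $(p,q)$ via Logan's $\tsBrCPm(\Fn)$ and then argues that any full solution $(r,s)$ can be pushed --- up by \ref{item: l1} if $r\leq p$ or $s\leq q$, down by iterating \ref{item: l2} if $r>p$ and $s>q$ --- to a solution with first coordinate exactly $p$ \emph{or} second coordinate exactly $q$. This reduces everything to the two one-dimensional slices $\{p\}\times\NN$ and $\NN\times\{q\}$. On each slice the free-part constraint forces the single free coordinate into an arithmetic progression (via the $\varphi^{\sim}$-periods $k_1,k_2$ from \Cref{lem: philog form}), and then \Cref{lem: powers conj periodic affine FATF} turns the abelian condition into a genuine Kannan--Lipton instance: one side is a \emph{fixed} target and only the other side evolves affinely. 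The missing idea in your outline is precisely this ``pin one coordinate'' step.
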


\begin{proof}
 Let $\fba{u}{a},\fba{v}{b}$ and $\Phi = \Phi_{\varphi,\matr{Q},\matr{P}} \in \Mon(\Fn\times\Zm)$ be the input for $\tsBrCPm(\Fn \times \Zm)$. By~\eqref{eq: powers hom FATF I}, any $(r,s)\in \NN^2$ satisfying $(\fba{u}{a})\Phi^r \conj (\fba{v}{b})\Phi^s$ also satisfies: $u\varphi^r\sim v\varphi^s$, an instance of $\tsBrCPm(\Fn)$, which is decidable by \cite[Proposition 5.3]{logan_conjugacy_2023}. If the answer to $\tsBrCPm(\Fn)$ (on input $(u,v,\varphi)$) is \nop, then the answer to $\tsBrPm(\Fn \times \Zm)$ is also \nop. Otherwise, we compute integers $(p,q)\in \NN^2$ such that $u\varphi^p \sim v\varphi^q$.
 We now claim that, if there is a solution $(r,s)\in \NN^2$ to our problem, then there must also exist a solution of the form $(k,q)\in \NN^2$ or one of the form $(p,k)\in \NN^2$. Indeed, if there is a solution $(r,s)\in \NN^2$ with $r\leq p$, then using \Cref{lem: pushing powers conjugation} (i), we have that $(p,s+p-r)$ is also a solution. Similarly, if $s\leq q$, we have that $(r+q-s,q)$ is a solution.
 Also, if there is a solution $(r,s)\in \NN^2$ with $r>p$ and $s>q$, then we can use \Cref{lem: pushing powers conjugation} (ii) to get a solution of the desired form.
 
 Using $\CP(\Fn \times \Zm)$, we can check by brute force if there is any solution of the form $(p,k)$ with $k\in [q]$ or $(k,q)$ with $k\in[p]$. If there is one, we answer $\yep$ and output that solution. If not, we use $\BrCPm(\Fn)$ to decide whether there are $k_1$ and $k_2\in \NN$ such that $u\varphi^p\varphi^{k_1}\sim u\varphi^p$ and $v\varphi^q\varphi^{k_2}\sim v\varphi^q.$
 
 
 By \Cref{lem: philog form}, we only have to look for solutions of the form $(p,q+k_2\lambda)$ and of the form $(p+ k_1\lambda,q)$, for $\lambda\in \NN$ (if for some $i\in\{1,2\}$, $k_i$ does not exist, we take $k_i=0$).
 
 We will now see that we can decide whether there is some $\lambda\in \NN$ such that $(\fba{u}{a})\Phi^p \conj (\fba{v}{b})\Phi^{q+\lambda k_2}$, and, in case there is such $\lambda$, compute it. If $k_2=0$, then this reduces to an instance of $\CP(\Fn \times \Zm)$. If not, we only have to check whether there is some $\lambda\in \NN$ such that $(\fba{u}{a})\Phi^p\tau = (\fba{v}{b})\Phi^{q+\lambda k_2}\tau$, since the free parts are guaranteed to be conjugate.
 

 Recall that by \Cref{lem: powers conj periodic affine FATF},
$
     (\fba{v}{b})\Phi^{q+\lambda k_2}\tau = (\fba{v}{b})\Phi^{q}\tau\widetilde{T}
     ^\lambda
$, 
 where  $\widetilde T$ is the affine transformation $\widetilde T \colon \ZZ^m \to \ZZ^m$, $\vect{x} \mapsto \vect{x} \matr{Q}^{k_2} + (v\varphi^{q})\ab\matr{P}^{(k_2)}$.
     %
 
 So, our problem is reduced to finding $\lambda\in \NN$ such that $(\fba{v}{b})\Phi^{q}\tau \widetilde T^\lambda=(\fba{u}{a})\Phi^{p}\tau$, which can be done in view of \Cref{thm: KannanLipton}.
 Similarly, we can decide if there is some $\lambda\!'\in \NN$ such that $(\fba{u}{a})\Phi^{p+\lambda\!'k_1} \conj (\fba{v}{b})\Phi^{q}$. This concludes the proof.
\end{proof}

Using Logan's machinery to prove the conjugacy problem in ascending HNN extensions, we obtain the following corollary.
\begin{cor}
    The conjugacy problem is solvable for ascending HNN extensions of $\FATF$ groups.\qed
\end{cor}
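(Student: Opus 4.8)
The plan is to invoke the 2-sided variation of the BMV machinery developed by Logan in \cite{logan_conjugacy_2023} for ascending HNN-extensions, which (as recorded in the remark preceding this subsection) applies to an arbitrary group $G$ provided that a short list of auxiliary problems is decidable. Concretely, following Logan's proof step by step, the conjugacy problem for $G \ast_\Phi$ reduces to: (i) the conjugacy problem $\CP(G)$, (ii) the twisted conjugacy problem $\TCPm(G)$ (for the monomorphism $\Phi$ and its powers), and (iii) the two-sided Brinkmann conjugacy problem $\tsBrCPm(G)$. So the corollary is obtained by feeding $G = \Fn \times \Zm$ into this machine and checking that each of the three ingredients is available.

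First I would recall that $\CP(\Fn \times \Zm)$ is decidable: two elements $\fba{u}{a}, \fba{v}{b}$ are conjugate in $\Fn \times \Zm$ if and only if $u \conj v$ in $\Fn$ and $\vect{a} = \vect{b}$, and both conditions are checkable (conjugacy in $\Fn$ is classical). Second, $\TCPm(\Fn \times \Zm)$ is decidable since $\TCPe(\Fn \times \Zm)$ was just established in the preceding subsection, and monomorphisms form a subfamily of endomorphisms; note that the powers $\Phi^k$ of a monomorphism are again monomorphisms (by \Cref{cor: injective bijective}) and their defining data is computable from that of $\Phi$ via \Cref{prop: powers FATF I}, so one genuinely has access to $\TCP$ for the relevant endomorphisms, not just for $\Phi$ itself. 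Third, $\tsBrCPm(\Fn \times \Zm)$ is decidable by the theorem immediately above. Hence all the hypotheses of Logan's 2-sided BMV machinery are satisfied, and $\CP(G \ast_\Phi)$ is decidable.

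The one point requiring care is to make sure the algorithmic inputs match: the HNN-extension $G \ast_\Phi$ is specified by a finite presentation of $G$ together with words giving the images $\Phi(g)$ of the generators, and from these one must be able to effectively recover the classification data of $\Phi$ as an endomorphism of type I (endomorphisms of type II are not injective by \Cref{cor: injective bijective}, so $\Phi \in \Mon(\Fn \times \Zm)$ forces type I with $\varphi$ injective and $\det \matr{Q} \neq 0$), namely $\varphi \in \Mon(\Fn)$, $\matr{Q}$ and $\matr{P}$; this is routine from the normal form $u\t[a]$. The main (and essentially only) obstacle is therefore bookkeeping rather than mathematical: verifying that Logan's argument for ascending HNN-extensions of free groups is stated, or can be straightforwardly re-stated, at the level of generality of an arbitrary base group $G$ with the three decidability inputs above — which is precisely the content of the remark preceding this subsection — and that the instances of $\TCP$, $\CP$, and $\tsBrCP$ arising in that argument are exactly the ones we have shown decidable for $\Fn \times \Zm$.
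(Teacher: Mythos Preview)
Your proposal is correct and matches the paper's approach exactly: the paper's proof is the single sentence ``Using Logan's machinery to prove the conjugacy problem in ascending HNN-extensions, we obtain the following corollary,'' relying on the just-established $\TCPe(\Fn\times\Zm)$ and $\tsBrCPm(\Fn\times\Zm)$ together with the remark that Logan's argument applies to any base group. Your write-up simply makes explicit the three ingredients and the bookkeeping, which is faithful to (and more detailed than) what the paper does.
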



\section*{Acknowledgments}
The first author was supported by
CMUP, which is financed by national funds through FCT – Fundação
para a Ciência e a Tecnologia, I.P., under the projects with reference UIDB/00144/2020
and UIDP/00144/2020.

The second author acknowledges support from the Spanish \emph{Agencia Estatal de Investigación} through grant PID2021-126851NB-I00 (AEI/FEDER, UE), as well as from the \emph{Universitat Politècnica de Catalunya} in the form of a ``María Zambrano'' scholarship.


\renewcommand*{\bibfont}{\small}
\printbibliography
\Addresses

\end{document}

